\newcommand{\Q}{{\mathbb Q}}
\newcommand{\C}{{\mathbb C}}
\newcommand{\Z}{{\mathbb Z}}
\newcommand{\N}{{N}}
\newcommand{\FF}{{\mathbb F}}
\newcounter{dummy}\numberwithin{dummy}{section}
\theoremstyle{plain}
\newtheorem{question}[dummy]{Question}
\newtheorem{theorem}[dummy]{Theorem}
\newtheorem{lemma}[dummy]{Lemma}
\newtheorem{prop}[dummy]{Proposition}
\newtheorem{corollary}[dummy]{Corollary}
\newtheorem{conjecture}[dummy]{Conjecture}
\theoremstyle{definition}
\newtheorem{remark}[dummy]{Remark}
\begin{document}

\author{Manisha Kulkarni}
\address{International Institute of Information Technology Bangalore,
Hosur Road, Bangalore, INDIA 560100.\\
email: manisha.shreesh@gmail.com} 
\author{Vijay M. Patankar}
\address{School of Physical Sciences, Jawaharlal Nehru University, New
  Delhi, INDIA 110067.\\
email: vijaypatankar@gmail.com}  
\author{C. S. Rajan}
\address{School of Mathematics, Tata Institute of Fundamental
  Research, Dr. Homi Bhabha Road, Colaba, Bombay, INDIA 400005.\\
email: rajan@math.tifr.res.in}


\keywords{Galois representations, elliptic curves, Frobenius fields, complex multiplication}
\subjclass{Primary 11F80 Secondary 11G05, 11G15}



\title[Potentially equivalent Galois representations]
{Locally potentially equivalent two dimensional Galois representations and Frobenius fields of elliptic curves}

\begin{abstract}

We show that a two dimensional $\ell $-adic representation of the absolute Galois group of a number field which is locally potentially equivalent to a $GL(2)$-$\ell$-adic representation $\rho$ at a set of places of
$K$ of positive upper density is potentially equivalent to $\rho$.

For an elliptic curver \( E \) defined over a number field \( K \) and for a place \( v \) of \( K \) of good reduction for \( E \), let \( F(E; v) \) denote the Frobenius field of \( E \) at \( v \), given
by the splitting field of the characteristic polynomial of the Frobenius automorphism at \( v \) acting on the Tate module of \( E \).

As an application, suppose \( E_1 \) and \( E_2 \) defined over a number field \( K \), with at least one of them without complex
multiplication. We prove that the set of  places \( v \) of \( K \) of good reduction such that the corresponding Frobenius fields are equal
has positive upper density if and only if  \( E_1 \) and \( E_2 \) are isogenous over some extension of \( K \).

For an elliptic curve $E$ defined over a number field \( K \), we show that the set of finite places of \( K \) such that  the Frobenius field \( F(E,v) \) at $v$
equals a fixed imaginary quadratic field \( F \) has positive upper density if and only if \( E \) has complex multiplication by \( F \).
\end{abstract}

\maketitle  

\section{Introduction}

Let \(E\) be an elliptic curve defined over  a number field \(K\).
Let $G_K={\rm Gal}(\bar{K}/K)$ denote
the absolute Galois group over $K$ of a separable closure $\bar{K}$ of
$K$. The Galois group $G_K$ acts in a natural manner on $E(\bar{K})$.
For a rational prime $\ell$, the Tate module
$T_\ell(E):=\varprojlim_n E[\ell^n]$ is the $G_K$-module obtained
as a projective  limit of the  $G_K$-modules $E[\ell^n]$ of
$\ell^n$-torsion points of $E$ over $\bar{K}$.  Let
$V_{\ell}(E)=T_\ell (E)\otimes_{\Z_l}\Q_l$. The Tate
module is of rank $2$ over the ring of $\ell$-adic integers $\Z_\ell$,
and we have a  continuous $\ell$-adic representation
$\rho_{E,\ell}:G_K\to GL_2(\Q_\ell)$.

Let \( \Sigma_K \) denote the set of finite places of \( K \)
and $\Sigma_r\subset
\Sigma_K$ a finite set of places containing the finite places of
bad reduction for $E$. The Galois module
$V_\ell(E)$ is unramified at the finite places of $K$ outside
$\Sigma_{r,\ell}=\Sigma_r\cup\{v|\ell\}$.
For $v\not\in \Sigma_{r,\ell}$, let $\rho_{\ell}(\sigma_v)$ denote
the Frobenius conjugacy class contained in $GL_2(\Q_\ell)$.
The representations $\rho_\ell$ form a compatible system of $\ell$-adic
representations, in that the characteristic polynomial \(
\phi_v (t) \) of  $\rho_{\ell}(\sigma_v)$ is independent of \(  \ell \) and its
coefficients  are integral.
Thus, \( \phi_v (t) := t^2 - a_v (E) t + \N v \), with
$a_v(E), ~Nv\in \Z$. Here,  $Nv$ is the cardinality of the
residue field  $ k_v := {\mathcal
O}_K/{\mathfrak p}_v$, where ${\mathcal O}_K$ is the ring of integers
of $K$, and ${\mathfrak p}_v$ is the prime ideal of ${\mathcal O}_K$
corresponding to $v$.

Define the
\emph{Frobenius field} \( F(E, v) \) of \( E \) at \( v \)
as the splitting field of
\( \phi_v (t) \) over \( \Q \). Thus,
\( F( E, v) = \Q ( \pi_v ) = \Q ( \sqrt{ a_v(E)^2 - 4 \N v } ) \),  where
\( \pi_v \) is a root of \( \phi_v (t) \). The Hasse bound
$|a_v(E)|\leq 2\sqrt{Nv}$ implies that \( F (E, v ) \) is either \( \Q
\) or an imaginary quadratic field.

As an application of and motivation for the main theorem (Theorem
\ref{poteq}), we have the following multiplicity-one type
theorem under the assumption that the set
of places \( v \) for which  the Frobenius fields coincide has
positive upper density as defined below.

\begin{theorem}\label{Mult-one-FF} Let \( E_1 \) and \( E_2 \) be two
elliptic curves over a number field \( K \). Let $\Sigma_r$ be a finite
subset of the set  \( \Sigma_K \) of finite places of  \(K\)
containing the places of bad reduction of $E_1$ and $E_2$.  Assume
that at least one of the elliptic curves is without complex
multiplication. Let
\[ S (E_1, E_2 ) := \{ v \in \Sigma_K ~\backslash ~\Sigma_r~\mid ~ F (E_1, v) = F ( E_2, v)
\}.
\] Then, \( E_1 \) and \( E_2 \) are isogenous over a finite extension
of \( K \) if and only if \( S ( E_1 , E_2 ) \) has  positive upper
density.
\end{theorem}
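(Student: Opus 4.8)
The plan is to derive Theorem~\ref{Mult-one-FF} from Theorem~\ref{poteq} together with Faltings' isogeny theorem, the crux being to convert equality of Frobenius fields at positively many places into local potential equivalence of the underlying $\ell$-adic representations. For the easy implication, suppose $E_1$ and $E_2$ become isogenous over a finite extension $L/K$, which we may take Galois over $K$. An isogeny over $L$ gives $\rho_{E_1,\ell}|_{G_L}\cong\rho_{E_2,\ell}|_{G_L}$, so whenever $v$ is unramified in $L$ and admits a place of $L$ above it of residue degree one --- a set of density at least $1/[L:K]$ by Chebotarev's theorem --- the classes $\rho_{E_1,\ell}(\sigma_v)$ and $\rho_{E_2,\ell}(\sigma_v)$ coincide, hence $\phi_v(E_1)=\phi_v(E_2)$ and a fortiori $F(E_1,v)=F(E_2,v)$; thus $S(E_1,E_2)$ has positive density.

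For the converse, assume $S(E_1,E_2)$ has positive upper density and argue by contradiction, supposing $E_1$ and $E_2$ are not isogenous over any finite extension of $K$. Relabelling, let $E_1$ be without complex multiplication, so $\rho_{E_1,\ell}$ is of $GL(2)$-type. We first reduce to the case where $E_2$ also has no complex multiplication. Indeed, if $E_2$ had complex multiplication by an imaginary quadratic field $F'$, discard the density-zero set of places of residue degree $\geq 2$, so that the remaining $v\in S(E_1,E_2)$ have $Nv$ prime, whence $a_v(E_i)^2-4Nv<0$ and $F(E_i,v)\neq\Q$. For such $v$ splitting in $KF'$ the roots of $\phi_v(E_2)$ lie in $F'$, so $F(E_1,v)=F(E_2,v)=F'$, while for $v$ inert in $KF'$ one has $a_v(E_2)=0$, and then $a_v(E_1)^2=Nv(4-k^2)$ forces $a_v(E_1)=0$ once $Nv>4$; since the set of $v$ with $a_v(E_1)=0$ has density zero by Serre's theorem, $\{v:F(E_1,v)=F'\}$ would have positive upper density, whence $E_1$ would have complex multiplication by $F'$ by the complex-multiplication criterion for Frobenius fields stated in the abstract --- a contradiction.

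So both $E_1$ and $E_2$ are without complex multiplication. By Faltings, $\rho_{E_1,\ell}$ and $\rho_{E_2,\ell}$ are not potentially equivalent, hence by Theorem~\ref{poteq} the set $B$ of places at which $\rho_{E_1,\ell}$ and $\rho_{E_2,\ell}$ are locally potentially equivalent has upper density zero --- and, up to a density-zero exceptional set, $B$ is just the set of places with $a_v(E_1)=\pm a_v(E_2)$ (for the char.\ polynomials then agree, possibly after the twist by the sign character, so the local representations are conjugate, possibly after a quadratic base change). Delete from $S(E_1,E_2)$ the set $B$ and the density-zero set of places where $Nv$ is not prime or $a_v(E_1)a_v(E_2)=0$. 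At a surviving place, $F(E_1,v)=F(E_2,v)$ says $a_v(E_1)^2-4Nv$ and $a_v(E_2)^2-4Nv$ have the same squarefree part $d=d(v)<0$ although $a_v(E_1)^2\neq a_v(E_2)^2$; equivalently, the Frobenius eigenvalues $\pi_v(E_1),\pi_v(E_2)$ generate the same imaginary quadratic field $F_v=\Q(\sqrt d)$ but their ratio $\pi_v(E_1)/\pi_v(E_2)\in F_v$, of absolute value one, is not a root of unity. Such $v$ are sparse: once $a_v(E_1)$ is fixed, $d$ is determined, and then $a_v(E_2)$ is confined to the $O_\epsilon(Nv^{\epsilon})$ integers $a$ with $a^2-4Nv\in d\Z^2$ (the classical bound for the number of representations of an integer by a binary quadratic form of discriminant $d$); combining this with the joint equidistribution of Frobenius classes of $E_1$ and $E_2$ (Chebotarev's theorem for the residual mod-$\ell$ representations, available since the curves are non-isogenous and hence have large mod-$\ell$ images) gives $\#\{v\in S(E_1,E_2)\setminus B:\ Nv\leq x\}=O_\epsilon(x^{1/2+\epsilon})=o\big(\#\{v:\ Nv\leq x\}\big)$. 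Hence $S(E_1,E_2)=B\cup\big(S(E_1,E_2)\setminus B\big)$ has upper density zero, contradicting the hypothesis; therefore $E_1$ and $E_2$ are isogenous over a finite extension of $K$.

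The step I expect to be the main obstacle is precisely this last estimate for $S(E_1,E_2)\setminus B$, i.e.\ the passage from ``the splitting fields of $\phi_v(E_1)$ and $\phi_v(E_2)$ agree at positively many $v$'' to ``their roots agree up to roots of unity at positively many $v$''. The obstruction is that the norm-one element $\pi_v(E_1)/\pi_v(E_2)\in F_v$ need not be an algebraic integer --- so, by Kronecker's theorem, need not be a root of unity --- which is why the naive implication fails; overcoming it needs both the structural input of Theorem~\ref{poteq} (which disposes cleanly of the places with $a_v(E_1)=\pm a_v(E_2)$) and an analytic argument that is uniform over the infinitely many imaginary quadratic fields that can arise as a common Frobenius field, with careful bookkeeping of the several thin sets of places discarded along the way.
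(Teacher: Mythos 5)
There is a genuine gap, and it sits exactly where you predicted: the claimed bound $\#\{v\in S(E_1,E_2)\setminus B:\ Nv\leq x\}=O_\epsilon(x^{1/2+\epsilon})$ is not a proof but a heuristic of Lang--Trotter strength. Chebotarev for the residual mod-$\ell$ representations only equidistributes the pair $(a_v(E_1)\bmod \ell,\,a_v(E_2)\bmod \ell)$ for finitely many fixed moduli, with error terms that cannot be summed to detect the event ``$a_v(E_2)$ lies in a set of $O_\epsilon(Nv^{\epsilon})$ prescribed integers depending on $a_v(E_1)$''; no power-saving (indeed, not even an $o(x/\log x)$ unconditionally, cf.\ the remark in the paper where Serre notes that a quantitative bound of size slightly below $X/\log X$ already requires GRH and a delicate sieve) can be extracted this way. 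In effect you are assuming a weak form of Conjecture \ref{LT-isogeny} to prove Theorem \ref{Mult-one-FF}. A secondary problem is the reduction to the case where $E_2$ is non-CM: you invoke ``the complex-multiplication criterion for Frobenius fields stated in the abstract'', i.e.\ Theorem \ref{FF-with-CM}, but in the paper that theorem is deduced \emph{from} Theorem \ref{Mult-one-FF}, so the citation is circular (and, as it turns out, no case distinction on $E_2$ is needed at all, since Theorem \ref{poteq} only requires the monodromy of $\rho_{E_1,\ell}$ to be $GL_2$).

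The missing idea is that the troublesome set you try to bound analytically is, after discarding density-zero sets, empty. Since $E_1$ has no CM, the places of supersingular reduction for $E_1$ and the places of residue degree $\geq 2$ have density zero, so one may restrict to $v\in S(E_1,E_2)$ of degree one at which $E_1$ is ordinary. By the characterization of ordinarity (Proposition \ref{iso-class} and Waterhouse), $F(E_1,v)=F(E_2,v)=F(v)$ is imaginary quadratic with $p_v$ split in it, and then $E_2$ is ordinary at $v$ as well. Ordinarity forces each eigenvalue $\pi_{i,v}$ to generate a \emph{prime} ideal above the split prime $p_v$ in $F(v)$ (it cannot be divisible by both primes above $p_v$, as $a_v(E_i)$ is prime to $p_v$), so by unique factorization of ideals $\pi_{1,v}=u\,\pi_{2,v}$ or $\pi_{1,v}=u\,\overline{\pi}_{2,v}$ with $u$ a unit of $F(v)$, hence a root of unity of order dividing $12$. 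Thus $\rho_{E_1,\ell}(\sigma_v)^{12}=\rho_{E_2,\ell}(\sigma_v)^{12}$ on a set of positive upper density, i.e.\ the representations are locally potentially equivalent there; the norm-one ratio you feared might fail to be an algebraic integer is automatically a unit at these places. One then concludes directly (no contradiction argument): Serre's open image theorem gives $G_1=GL_2$, the determinants agree (both cyclotomic), Theorem \ref{poteq} gives potential equivalence, and Faltings gives an isogeny over a finite extension. Your forward direction agrees with the paper and is fine.
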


We recall the notion of upper density:  given a set \( S \subset \Sigma_K  \) of finite places of
\( K \), recall that the \emph{upper density} \( ud(S) \) of \( S \)
is defined as,
\[
ud(S) := {\rm  lim sup}_{x \rightarrow \infty} \frac{\#\{v\in
\Sigma_K ~ \mid ~ Nv\leq x,~v\in S\}}{ \#\{v\in \Sigma_K ~ \mid ~
Nv\leq x\}}.
\]
The advantage of working with upper density is that it always exists,
whereas the (naive) density is defined as the limit (if it exists) of
the above expression:
\[
  d(S) := {\rm  lim}_{x \rightarrow \infty} \frac{\#\{v\in \Sigma_K ~
  \mid ~ Nv\leq x,~v\in S\}}{ \#\{v\in \Sigma_K ~ \mid ~ Nv\leq x\}}.
\]

Using Theorem \ref{Mult-one-FF}, we prove:
\begin{theorem}\label{FF-with-CM}
Let \( E \) be an elliptic curve
over a number field \( K \). Let \( F \) be an imaginary quadratic
field. Let $\Sigma_r$ be a finite subset of the set  \( \Sigma_K \)
containing the places of bad reduction of $E$.
Let \( S(E, F ) := \{ v \in \Sigma_K ~\backslash ~\Sigma_r~| ~F(E, v) = F \} \).
Then, \( S( E, F ) \) has positive upper density if and only if \( E \) has complex multiplication by \( F \).
\end{theorem}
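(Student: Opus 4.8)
The plan is to deduce Theorem \ref{FF-with-CM} from the multiplicity-one statement of Theorem \ref{Mult-one-FF}.  The key observation is that one can always manufacture an auxiliary elliptic curve $E'$ over $K$ having complex multiplication by $F$ and encoding the condition ``$F(E,v)=F$'' as a Frobenius-field coincidence ``$F(E,v)=F(E',v)$'' at almost all places where $E'$ has ordinary (good) reduction.  Concretely, fix an order in $F$ and take an elliptic curve $E'$ defined over the Hilbert class field, descended if necessary after a finite base change (which is harmless since we only need an isogeny over \emph{some} extension of $K$, and we may enlarge $\Sigma_r$ freely); after replacing $K$ by a finite extension we may assume $E'$ is defined over $K$ and has CM by $F$.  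I would then record the standard dichotomy for a CM curve: for $v\nmid\ell$ of good reduction, $v$ splits in $F$ exactly when $E'$ has ordinary reduction at $v$, in which case $F(E',v)=F$; and $v$ is inert or ramified exactly when $E'$ has supersingular reduction at $v$, in which case $a_v(E')=0$ and $F(E',v)=\Q(\sqrt{-Nv})$.  Since the supersingular places of a CM curve have density $1/2$ (they are, up to finitely many ramified primes, the primes inert in $F$), the set of $v$ with $F(E',v)=F$ already has density $1/2$.

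**Forward direction.**  Suppose $S(E,F)$ has positive upper density.  I would first dispose of the supersingular-for-$E$ part: at a place where $E$ has supersingular reduction, $a_v(E)=0$ and $F(E,v)=\Q(\sqrt{-Nv})$, and the set of $N v$ that are perfect multiples making $\Q(\sqrt{-Nv})=F$ — i.e.\ $-Nv/(\text{disc }F)$ a square — is easily seen to have density zero among all places (a congruence/size argument on $Nv$, or simply Chebotarev applied to $\Q(\sqrt{\operatorname{disc} F})$ together with the fact that $Nv$ is a prime power $p^f$ and the relevant condition forces $f$ even or $p\mid\operatorname{disc}F$).  Hence the positive-upper-density hypothesis forces a positive-upper-density set of places $v$ at which $E$ has ordinary reduction and $F(E,v)=F$.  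For such $v$, one has $F(E,v)=F=F(E',v)$ provided $v$ splits in $F$, which holds automatically since $F(E,v)=F$ forces $a_v(E)^2-4Nv$ to generate $F$ and in particular $v$ splits in $F$ (an ordinary place $v$ of a curve whose Frobenius field is $F$ must split in $F$).  Thus $S(E,E')$ (relative to a suitably enlarged $\Sigma_r$) has positive upper density.  Now apply Theorem \ref{Mult-one-FF}: since $E'$ has CM (so the hypothesis ``at least one of them is non-CM'' must be met by $E$ — and if $E$ were also CM, handle that separate easy case directly by comparing CM fields), we conclude $E$ and $E'$ are isogenous over a finite extension of $K$.  Isogenous curves have the same CM, so $E$ has CM by $F$.  (The genuinely CM-vs-CM case: if $E$ has CM by some $F'\neq F$, then $F(E,v)=F'$ at the density-$1/2$ set of split primes for $F'$ and $F(E,v)=\Q(\sqrt{-Nv})\neq F$ generically at the inert primes, so $S(E,F)$ has density zero — contradiction; if $E$ has no CM, the non-CM hypothesis of Theorem \ref{Mult-one-FF} is satisfied and the argument above applies verbatim.)

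**Converse and the main obstacle.**  Conversely, if $E$ has CM by $F$, then by the dichotomy above $F(E,v)=F$ for every good ordinary place, and ordinary places have density $1/2$, so $S(E,F)$ has upper density $\ge 1/2>0$.  The main obstacle I anticipate is not the Galois-representation input (that is packaged in Theorem \ref{Mult-one-FF}) but the bookkeeping around \emph{reduction types and base change}: one must check that passing to the finite extension over which $E'$ is defined, and correspondingly enlarging $\Sigma_r$ and replacing $K$, does not destroy the positive-upper-density hypothesis — upper density can drop under restriction of scalars, so some care (e.g.\ working with places of $K$ that split completely in the auxiliary extension, which form a positive-density subset by Chebotarev, and noting Frobenius fields are insensitive to such base change for the split places) is needed to transport the hypothesis cleanly.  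A secondary nuisance is making precise the ``generically $\Q(\sqrt{-Nv})\ne F$'' density-zero claims for supersingular places; these are elementary but must be stated carefully since $Nv$ ranges over prime powers, not primes.
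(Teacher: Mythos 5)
Your overall strategy is the paper's own: dispose of the supersingular places, treat the case where $E$ has CM by some $F'$ by comparing $F'$ with $F$ directly, and in the non-CM case manufacture an auxiliary curve $E'$ with CM by $F$ over (an extension containing) the Hilbert class field and feed the coincidence $F(E,v)=F=F(E',v)$ into Theorem \ref{Mult-one-FF}; the easy direction (CM by $F$ implies positive density) is also handled essentially as in the paper. However, there is a genuine gap at exactly the point you flag as the ``main obstacle,'' and your proposed remedy does not close it. You need the coincidence set to have positive upper density \emph{as a set of places of the larger field} $L$ over which $E'$ is defined, and your fix --- intersect $S(E,F)$ with the places of $K$ that split completely in $L$, ``a positive-density set by Chebotarev'' --- is a non sequitur: $S(E,F)$ is only known to have positive \emph{upper} density, and the intersection of a positive-upper-density set with a positive-density set can have upper density zero (a priori the two sets could even be disjoint, e.g.\ $S(E,F)$ could conceivably consist of places inert in $L/K$, whose lifts to $L$ have norms $\geq p^2$ and contribute nothing to the density count in $L$). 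Without this, Theorem \ref{Mult-one-FF} cannot be invoked over $L$, and the argument stops.

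The missing idea, which is the heart of the paper's proof, is that the needed splitting is \emph{automatic} rather than something to be arranged by intersection: for a degree-one ordinary place $v\in S(E,F)$ the Frobenius root $\pi_v$ is an algebraic integer of $F$ with $\pi_v\overline{\pi}_v=p_v$, so $(\pi_v)$ and $(\overline{\pi}_v)$ are \emph{principal} prime ideals of $\mathcal{O}_F$ above the split prime $p_v$; by class field theory principal primes split completely in the Hilbert class field $H$, hence $p_v$ splits completely in $H$, hence $v$ splits completely in $L=HK$. Consequently every such $v$ lies under $[L:K]$ degree-one places $w$ of $L$, the set of these $w$ has positive upper density in $\Sigma_L$, and at each of them $F(E,w)=F$ and $F(E',w)=F$ (the latter because $p_v$ splits in $F$, so $E'$ has ordinary reduction at $w$ with endomorphism algebra $F$, by Proposition \ref{iso-class}). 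Note that this argument also forces the choice $E'(\C)\simeq \C/\mathcal{O}_F$, i.e.\ CM by the \emph{maximal} order, so that $E'$ is defined over $H$ itself and the principal-prime splitting applies; ``fix an order in $F$'' as in your setup would put $E'$ over a ring class field, where splitting of the primes $(\pi_v)$ is no longer automatic. With this class-field-theoretic step inserted, the rest of your outline goes through as in the paper.
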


As a consequence, we prove:

\begin{corollary}
Let \( E \) be an elliptic curve over a number field \( K
\). Let \( F(E) \) be the compositum of the Frobenius fields \( F(E,v)
\) as \( v \) varies over places of good ordinary reduction for \( E
\). Then, \( F(E ) \) is a number field if and only if \( E \) is an
elliptic curve with complex multiplication.
\end{corollary}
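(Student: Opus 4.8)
The plan is to deduce the corollary formally from Theorem~\ref{FF-with-CM}; the only additional ingredient is the classical description of the Frobenius field at a place of good ordinary reduction.

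I would first treat the direction ``$E$ has complex multiplication $\Rightarrow F(E)$ is a number field''. Suppose $E$ has complex multiplication by an imaginary quadratic field $F$, and let $v$ be any place of good ordinary reduction, with reduced curve $\tilde E_v/k_v$. Since the reduction map on endomorphism rings is injective, $\mathrm{End}(\tilde E_v)\otimes\Q$ contains a copy of $F$; because $\tilde E_v$ is ordinary this algebra is an imaginary quadratic field, hence equals $F$. The Frobenius $\pi_v$ lies in it and is not in $\Q$ (otherwise $\tilde E_v$ would be supersingular), so it generates $F$; thus $F(E,v)=\Q(\pi_v)=F$ for \emph{every} place $v$ of good ordinary reduction. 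Hence the compositum $F(E)$ equals $F$, which is a number field.

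For the converse, suppose $F(E)$ is a number field. A number field contains only finitely many quadratic subfields, and each $F(E,v)$ is either $\Q$ or an imaginary quadratic field contained in $F(E)$; therefore, as $v$ runs over the places of good ordinary reduction, $F(E,v)$ takes only finitely many values, say the imaginary quadratic fields $F_1,\dots,F_m$. Now the set $\Sigma_{\mathrm{ord}}$ of places of good ordinary reduction has positive density --- density one if $E$ has no complex multiplication, by Serre's theorem that the supersingular places have density zero, and density at least $1/2$ if $E$ has complex multiplication, these places being, up to finitely many exceptions, those lying over rational primes split in the CM field. Writing $\Sigma_{\mathrm{ord}}$ as the disjoint union of the sets $S_i=\{v\in\Sigma_{\mathrm{ord}}:F(E,v)=F_i\}$ for $i=1,\dots,m$ and using that a finite union of sets of upper density zero has upper density zero, some $S_i$ has positive upper density. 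Take $\Sigma_r$ to be the (finite) set of places of bad reduction of $E$; then the set $S(E,F_i)$ of Theorem~\ref{FF-with-CM} contains all but finitely many places of $S_i$, so it too has positive upper density, and Theorem~\ref{FF-with-CM} yields that $E$ has complex multiplication by $F_i$. This completes the argument; the only substantive input beyond Theorem~\ref{FF-with-CM} is the positive density of $\Sigma_{\mathrm{ord}}$, and I do not expect any genuine obstacle, the proof being essentially a pigeonhole argument.
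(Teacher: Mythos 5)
Your proposal is correct and follows exactly the route the paper intends: the paper states the corollary as a consequence of Theorem~\ref{FF-with-CM} (plus the standard facts on ordinary reduction recorded in Proposition~\ref{iso-class} and Corollary~\ref{ssdensity}), and your pigeonhole argument on the finitely many quadratic subfields of $F(E)$ together with the positive density of ordinary places is precisely that deduction.
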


The above corollary also follows from a set of exercises  in Serre's
book \cite{S1} Chapter IV, pages 13-14. Thus, Theorem
(\ref{FF-with-CM}) can be considered as a strengthening of this
corollary.
\medskip\par

\begin{remark}
In an email communication, Serre has given an alternate proof of Theorem \ref{Mult-one-FF}. We briefly explain his proof in Remark \ref{Serre-proof}.
\end{remark}

\subsection{Potentially equivalent Galois representations}
In an earlier version of this paper, we used the results of (\cite{PR}) on global equivalence
of locally potentially equivalent $\ell$-adic representations to prove
Theorem \ref{Mult-one-FF}.  It was pointed out
by J.-P. Serre that the proof of Theorem 2.1 given in \cite{PR}
is erroneous. We salvage this by proving a version of Theorem 2.1 of \cite{PR} for \( n = 2 \).

For any place $v$ of $K$, let $K_v$ denote the completion of $K$ at
$v$, and $G_{K_v}$ the corresponding local Galois group. Choosing a
place $w$ of $\bar{K}$ lying above $v$, allows us to identity
$G_{K_v}$ with the decomposition subgroup $D_w$ of $G_K$. As $w$
varies this gives a conjugacy class of subgroups of $G_K$. Given a
representation $\rho: G_K\to GL_n(F)$, for some field \( F \), define the localization (or
the local component) \( \rho_{v} \) of $\rho$ at $v$, to be the
representation of $G_{K_v}$ obtained by restricting $\rho$ to a
decomposition subgroup at \( v \). This is well defined upto isomorphism.

At a place $v $ of $K$ where
$\rho$ is unramified, let $\rho(\sigma_v)$ denote the Frobenius conjugacy
class in the quotient group $G_K/{\rm Ker}(\rho)$. By an abuse of
notation, we will also continue to denote by \( \rho(\sigma_v) \) an element
in the associated conjugacy class.

Define  the algebraic
monodromy  group $G$ attached to \( \rho \) to be
the smallest algebraic subgroup $G$ of $GL_n$ defined over $F$ such that
$\rho(G_K)\subset G(F)$.

We now give a version of Theorem 2.1 of (\cite{PR}) for two
dimensional Galois representations.

\begin{theorem}\label{poteq}
Suppose $\rho_i: G_K \to GL_2(F), ~i=1,2$ are two continuous
semisimple \( \ell \)-adic representations of the absolute
Galois group $G_K$ of a global field $K$ unramified outside a finite set of
places of $K$, where \( F \) is a non-archimedean
local field of characteristic zero
and residue characteristic \( \ell \) coprime to the
characteristic of \( K \).

Suppose there exists a set $T$ of finite places of $K$ of
positive upper density
such that for every $v\in T$, $\rho_{1,v} $ and $\rho_{2,v}$ are potentially equivalent.

Assume that the algebraic monodromy group $G_{1}$ attached to the representation
$\rho_{1}$ is isomorphic to $GL_2$, and that the determinant
characters of $\rho_1$ and $\rho_2$ are equal.

Then, $\rho_{1}$ and $\rho_{2}$ are potentially equivalent,
viz. there exists a finite extension $L$ of $K$ such that
\[
\rho_{1} \mid_{G_L} \simeq \rho_{2} \mid_{G_L} .
\]
\end{theorem}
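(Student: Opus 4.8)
The plan is to reduce the statement to a comparison of traces on a density-one set of places, where "potentially equivalent" can be packaged as a statement about the restriction of traces after passing through a finite-index subgroup. First I would exploit the hypothesis that the algebraic monodromy group $G_1$ of $\rho_1$ is all of $GL_2$: this means $\rho_1$ has "big image" in the strongest sense, and in particular the image $\rho_1(G_K)$ is Zariski-dense in $GL_2(F)$. Consider the product representation $\rho = \rho_1 \oplus \rho_2 : G_K \to GL_2(F) \times GL_2(F)$ and let $H$ denote the Zariski closure of its image, viewed as an algebraic subgroup of $GL_2 \times GL_2$ whose first projection is surjective onto $GL_2$. The local potential equivalence at $v \in T$ says precisely that the characteristic polynomials of $\rho_{1,v}(\sigma_w)$ and $\rho_{2,v}(\sigma_w)$ agree for all $w$ in a finite-index subgroup of the local Galois group; combined with the global compatibility coming from semisimplicity, the upshot should be that for $v$ in a positive-density set, the Frobenius $\rho(\sigma_v)$ lands in the subvariety $\{(g_1, g_2) : \det g_1 = \det g_2,\ \operatorname{tr}^m(\text{something})\}$ — more precisely, in the union over the finitely many relevant "twisting" situations of subvarieties cut out by trace identities.

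The key mechanism is a Chebotarev-type argument: a positive-upper-density set of Frobenius elements cannot be confined to a proper Zariski-closed (or even a proper analytic/measure-zero) subset of the image group, by the effective Chebotarev density theorem applied to the $\ell$-adic image, using that the image is a compact $\ell$-adic Lie group with finitely many components and Haar measure detecting Zariski-closed proper subsets as null sets. So from the density-$>0$ hypothesis I would conclude that the subvariety into which $\rho(\sigma_v)$ falls for $v \in T$ must in fact contain a full Zariski-irreducible component of $H$, equivalently that $H$ itself lies (up to a finite-index subgroup) inside the locus where the two factors are "potentially trace-equivalent." Since the first factor surjects onto all of $GL_2$, an $SL_2$-representation-theory analysis of which subgroups of $GL_2 \times GL_2$ can have both projections large and lie in a trace-coincidence locus will force the second factor to be, on a finite-index subgroup, either isomorphic to the first or isomorphic to a twist of the first by a quadratic character (the twist being absorbed by the "potential" in potential equivalence); here the equality of determinant characters rules out the one-dimensional and reducible degenerate cases and pins down the quadratic twist ambiguity. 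Passing to the fixed field $L$ of the relevant finite-index subgroup then gives $\rho_1|_{G_L} \simeq \rho_2|_{G_L}$ by Brauer–Nesbitt (equal characteristic polynomials of Frobenii at a density-one set of places, both representations semisimple).

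The main obstacle I anticipate is the group-theoretic classification step: controlling all closed subgroups $H \subset GL_2 \times GL_2$ whose first projection is surjective and which are contained in the "locally potentially trace-equivalent" locus. The subtlety is that local potential equivalence at each $v$ only furnishes a finite-index subgroup of $G_{K_v}$ on which traces agree, and a priori the index and the offending quadratic character could vary with $v$; one must argue that only finitely many such characters occur (they are unramified outside a fixed finite set and ramified only at places of bounded ramification, hence finite in number) and then run the density argument on the largest resulting piece. Once that finiteness is in hand, the case $n = 2$ is small enough that one can enumerate the possibilities for $H$ by hand — essentially $H$ is the graph of $GL_2 \to GL_2$ given by identity, by $g \mapsto {}^t g^{-1} \cdot \det g$ (the contragredient-twist, which for $GL_2$ is conjugate to the identity), or by $g \mapsto \chi(g) g$ for a quadratic $\chi$ — and each leads to potential equivalence. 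I would expect the write-up to lean on the results of Serre on $\ell$-adic images ($\rho_1(G_K)$ open in $GL_2(F)$-type statements following from $G_1 = GL_2$) to make the Chebotarev/measure input rigorous.
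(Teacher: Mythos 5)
Your skeleton --- pass to $\rho_1\times\rho_2$, use a Chebotarev-type argument to force a whole connected component of the algebraic monodromy group into the trace-coincidence locus, and exploit $G_1=GL_2$ --- is indeed the paper's strategy (the paper uses the algebraic Chebotarev density theorem, Theorem \ref{Algebraic-Chebotarev}, rather than a Haar-measure argument, but that part of your sketch is sound in spirit). The genuine gap is at precisely the step you yourself flag as the ``main obstacle'': for an unramified $v\in T$, local potential equivalence says that for some $n_v$ the Frobenius eigenvalues satisfy $\pi_{2,v}=u\,\pi_{1,v}$ and $\overline{\pi}_{2,v}=\overline{u}\,\overline{\pi}_{1,v}$ with $u,\overline{u}$ roots of unity that need \emph{not} be equal. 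Your analysis implicitly treats only the symmetric case $u=\overline{u}$, i.e.\ the case where $\rho_{2}(\sigma_v)$ is a root-of-unity scalar twist of $\rho_{1}(\sigma_v)$; that case reduces to a trace-power identity $\chi_{\rho_1}(\sigma_v)^m=\chi_{\rho_2}(\sigma_v)^m$ and is handled in the paper by Theorem \ref{characterpowers} (whose proof needs the compact/unitary specialization trick to pass from ``some component lies in $X^m$'' to ``the identity component lies in $X^m$''---another point your sketch elides). The asymmetric case $u\neq\overline{u}$ is not a character twist at all, so it is not covered by your proposed enumeration of $H$ as a graph of the identity, the contragredient twist, or $g\mapsto\chi(g)g$; yet it is exactly where the determinant hypothesis enters. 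The paper converts $\det\rho_1(\sigma_v)=\det\rho_2(\sigma_v)$ into the quadratic relation $t_1^2+t_2^2-a\,t_1t_2=bd$ between the traces, uses Lemma \ref{surj-to-SL2} (the $F$-points of the identity component project onto a set containing $SL_2(F)$, so $t_1$ ranges over all of $F$ on the component forced into this locus by Theorem \ref{Algebraic-Chebotarev}), and then the arithmetic Lemma \ref{arith} to show that the discriminant $4bd+(a^2-4)t_1^2$ cannot be a square for every $t_1\in F$, yielding a contradiction. None of this is present in, or recoverable from, your ``enumerate $H$ by hand'' step as written.

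A secondary problem is your finiteness mechanism for the varying twists: you propose that the ``offending quadratic characters'' are finite in number because they are unramified outside a fixed set with bounded ramification, but the twisting data here is purely local at each $v$ (a pair of roots of unity relating eigenvalues) and is not a priori the value of any global character, so that argument does not apply. The correct finiteness, used in the paper, is that all the eigenvalue fields $F(v)$ lie in finitely many (bi)quadratic extensions of the local field $F$, hence all twisting roots of unity lie in the finite group $\mu_{F'}$ of a single finite extension $F'$; one then pigeonholes $T$ into finitely many sets $T_u$ of which one has positive upper density. With that replacement, and with the two missing arguments above (the unitary specialization in the $u=\overline{u}$ case and the Lemma \ref{surj-to-SL2}/Lemma \ref{arith} contradiction in the $u\neq\overline{u}$ case), your outline becomes the paper's proof; as it stands, it does not close.
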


The proof of Theorem \ref{poteq} uses an
``analytic and algebraic continuation'' of the Galois groups,
involving specializing to appropriate elements
in the algebraic Galois monodromy groups.

The proof of the following theorem (\cite[Theorem 3.1]{PR}) was given
as a consequence of (\cite[Theorem 2.1]{PR}).
Here we give a proof, following closely the original argument, but
removing the dependence on (\cite[Theorem 2.1]{PR}).

\begin{theorem}\label{characterpowers}
Suppose $\rho_i: G_K \to GL_n(F), ~i=1,2$ are two continuous
semisimple \( \ell \)-adic representations of the absolute
Galois group $G_K$ of a global field $K$ unramified outside a finite set of
places of $K$, where \( F \) is a non-archimedean
local field of characteristic zero
and residue characteristic \( \ell \) coprime to the
characteristic of \( K \).

Assume that there exists a set $T$ of finite
places of $K$  not containing the ramified places of $\rho_1 \times
\rho_2$ and the places of $K$ lying above $\ell$,
such that for every $v\in T$, there exists non-zero
integers  $m_v> 0 $ satisfying the following:
\[
    \chi_{\rho_1}(\sigma_v)^{m_v} =\chi_{\rho_2}(\sigma_v)^{m_v} ,
\]
where $\chi_{\rho_1}(\sigma_v)$ (resp. $\chi_{\rho_1}(\sigma_v)$) is
the trace of the image of the Frobenius conjugacy class
${\rho_1}(\sigma_v)$ (resp. ${\rho_2}(\sigma_v)$).

Suppose the upper density \( ud (T) \) of \( T \) is positive, and
the Zariski closure of
\( \rho_1 (G_K) \) is a connected algebraic group.

Then, $\rho_1$ and $\rho_2$ are potentially equivalent, viz. there
exists a finite extension $L$ of $K$ such that
\[
\rho_1 \mid_{G_L} \simeq \rho_2 \mid_{G_L} .
\]
\end{theorem}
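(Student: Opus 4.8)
The plan is to reduce the statement, in three moves, to two ingredients: the finiteness of the group $\mu(F)$ of roots of unity in a non-archimedean local field of characteristic zero, and a Chebotarev-type equidistribution principle for the algebraic monodromy group. Throughout I write $\mathrm{tr}$ for the trace function, so that $\chi_{\rho_i}(\sigma_v)=\mathrm{tr}\,\rho_i(\sigma_v)$.

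\emph{Step 1 (normalizing the exponent).} Since $F$ is a non-archimedean local field of characteristic zero, $\mu(F)$ is finite; put $M=|\mu(F)|$. For $v\in T$ the numbers $\chi_{\rho_1}(\sigma_v),\chi_{\rho_2}(\sigma_v)$ lie in $F$: if one vanishes then, as $m_v>0$, both do; otherwise their ratio is a root of unity in $F$, hence of order dividing $M$. In all cases $\chi_{\rho_1}(\sigma_v)^{M}=\chi_{\rho_2}(\sigma_v)^{M}$ for every $v\in T$. Set $A:=\rho_1^{\otimes M}$ and $B:=\rho_2^{\otimes M}$; these are semisimple (tensor powers of semisimple representations are semisimple in characteristic zero), $\mathrm{tr}\,A(\sigma_v)=\chi_{\rho_1}(\sigma_v)^{M}=\chi_{\rho_2}(\sigma_v)^{M}=\mathrm{tr}\,B(\sigma_v)$ for all $v\in T$, and the Zariski closure of $A(G_K)$ is the image of the (by hypothesis connected) Zariski closure of $\rho_1(G_K)$ under the morphism $g\mapsto g^{\otimes M}$, hence is connected. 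Note also that, by connectedness, the Zariski closure of $\rho_1(G_{K'})$ equals that of $\rho_1(G_K)$ for every finite extension $K'/K$, so passing to finite extensions is harmless on the $\rho_1$ side.

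\emph{Step 2 (from positive density to an identity on the monodromy group).} Let $H\subset\GL_N\times\GL_N$, $N=n^{M}$, be the Zariski closure of $(A\oplus B)(G_K)$ and $\Gamma$ the closure of $(A\oplus B)(G_K)$ in the $\ell$-adic topology, a compact subgroup that is Zariski dense in $H$. The function $\phi(x_1,x_2):=\mathrm{tr}(x_1)-\mathrm{tr}(x_2)$ is regular on $H$, conjugation-invariant, and vanishes at $(A\oplus B)(\sigma_v)$ for all $v\in T$, hence on the closure $\bar U$ of the union of the $H$-conjugacy classes of these elements. By Chebotarev, Frobenii are equidistributed in $\Gamma$ for Haar measure, so $\mu(\bar U)\ge ud(T)>0$; on the other hand a proper Zariski-closed, conjugation-stable subset of $H$ meets $\Gamma$ in a set of Haar measure zero unless it contains an entire connected component of $H$. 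Hence $\phi$ vanishes identically on some component of $H$. Now connectedness of the monodromy group of $A$ enters: the first projection maps $H$, and therefore every component of $H$, onto the connected group $\overline{A(G_K)}$, and this is what lets one pass from vanishing of $\phi$ on one component to vanishing of $\phi$ on the identity component $H^0$. Consequently $\mathrm{tr}\,A=\mathrm{tr}\,B$ on $H^0$, i.e. $\mathrm{tr}\,A(\sigma)=\mathrm{tr}\,B(\sigma)$ for all $\sigma\in G_L$ with $\Gal(L/K)\cong\pi_0(H)$, and since $A,B$ are semisimple, $A\mid_{G_L}\simeq B\mid_{G_L}$.

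\emph{Step 3 (extracting the $M$-th root).} Enlarging $L$ to the finite extension cutting out the component group of the Zariski closure $G$ of $(\rho_1\oplus\rho_2)(G_L)$, we keep $A\mid_{G_L}\simeq B\mid_{G_L}$ and make $G$ connected; then $\chi_{\rho_1}(\sigma)^{M}=\chi_{\rho_2}(\sigma)^{M}$ for \emph{all} $\sigma\in G_L$. For $\zeta\in\mu(F)$ let $X_\zeta=\{(x_1,x_2)\in G:\mathrm{tr}(x_1)=\zeta\,\mathrm{tr}(x_2)\}$, a Zariski-closed subset. Every element of the Zariski-dense set $(\rho_1\oplus\rho_2)(G_L)$ lies in some $X_\zeta$ (if both traces vanish it lies in $X_1$; otherwise the ratio of traces is an element of $\mu(F)$), so $G=\bigcup_{\zeta\in\mu(F)}X_\zeta$. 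A connected algebraic group is irreducible, so $G=X_{\zeta_0}$ for a single $\zeta_0$; evaluating at the identity gives $n=\zeta_0 n$, whence $\zeta_0=1$. Therefore $\chi_{\rho_1}=\chi_{\rho_2}$ on $G_L$ and, by semisimplicity, $\rho_1\mid_{G_L}\simeq\rho_2\mid_{G_L}$.

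\emph{Main obstacle.} Steps 1 and 3 are formal; the real work is Step 2, i.e. converting the merely \emph{positive} upper density of $T$ (not density one, and possibly arbitrarily small) into an exact identity on the monodromy group. This needs the quantitative Chebotarev bound $ud(T)\le\mu(\bar U)$ together with the fact that a conjugation-stable proper subvariety of a connected component corresponds to a set of places of density zero, and — the subtle point — careful bookkeeping with the component group $\pi_0(H)$: since $T$ may concentrate entirely in a non-identity component, one must use that the first projection of $H$ onto the connected group $\overline{\rho_1(G_K)^{\otimes M}}$ loses no information in the $\rho_1$-direction in order to transfer the vanishing of $\phi$ to $H^0$ (equivalently, to descend to a finite extension over which the monodromy is connected). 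This is precisely where the hypothesis that the Zariski closure of $\rho_1(G_K)$ is connected is indispensable.
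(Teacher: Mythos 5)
Your Steps 1 and 3 are fine: replacing trace powers by $M$-th tensor powers and then extracting the root of unity via irreducibility of a connected group is correct, and is essentially a repackaging of what the paper does (it works with the variety $\mathrm{Trace}(g_1)^m=\mathrm{Trace}(g_2)^m$ and runs the same ``equal at the identity, hence equal on the connected group'' argument at the end). The genuine gap is at the centre of Step 2, precisely the point you yourself flag as the main obstacle: the passage from ``$\phi=\mathrm{tr}(x_1)-\mathrm{tr}(x_2)$ vanishes on some component $H^c$ of $H$'' to ``$\phi$ vanishes on $H^0$''. Surjectivity of the first projection onto the connected group $\overline{A(G_K)}$ gives you only this: there is an element of the form $(1,y)\in H^c(\overline F)$, so that writing $H^c=(1,y)H^0$ the hypothesis becomes $\mathrm{tr}(h_1)=\mathrm{tr}(y\,h_2)$ for all $(h_1,h_2)\in H^0$, and in particular $\mathrm{tr}(y)=N$. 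Nothing said so far controls $y$: a nontrivial unipotent $y$, or a semisimple $y$ whose eigenvalues merely sum to $N$, also has trace $N$, and for such $y$ you cannot replace $\mathrm{tr}(y\,h_2)$ by $\mathrm{tr}(h_2)$. The phrase ``the projection loses no information in the $\rho_1$-direction'' is not an argument; the information you are missing concerns the second coordinate, and the implication is not a formal consequence of connectedness of $\overline{A(G_K)}$ plus surjectivity of $p_1$.

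This is exactly where the paper does its real work: $H$ is reductive (Zariski closure of the image of a semisimple representation), one passes to $\C$ and a maximal compact subgroup $J$ meeting the relevant component, arranges the distinguished element $(1,y)$ to lie in $J$ (after disposing of the unipotent part), and uses that a unitary $N\times N$ matrix whose trace has absolute value $N$ must be a scalar $\zeta I_N$; the component is then the translate $H^0\cdot(1,\zeta I_N)$ and the vanishing of $\phi$ (there, of the trace-power condition) transfers to $H^0$. In your normalization the same argument would in fact give $y=I_N$, hence $H^c=H^0$, and Step 2 would close. But some argument of this kind must be supplied; as written, the key step of your proof is asserted rather than proved, so the proposal is incomplete.
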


\section{Proof of Theorem \ref{characterpowers}}

In this section, we give a proof of Theorem \ref{characterpowers}. The
argument is essentially the one given in (\cite{PR}), but we remove the dependence on (\cite[Theorem 2.1]{PR}).

\subsection{An algebraic Chebotarev density theorem}\label{Alg-Cheb-section}

We recall an algebraic version of the Chebotarev density theorem of
(\cite{R1}, see also \cite{S3}). Here $F$ stands for a non-archimedean
local field of characteristic $0$.

\begin{theorem}\label{Algebraic-Chebotarev} \cite[Theorem 3]{R1}~~
Let \( M \) be an algebraic group defined over \( F \). Suppose
\[
\rho : G_K \rightarrow M(F)
\]
is a continuous representation unramified
outside a finite set of places of $K$. Let $G$  be the Zariski closure
inside $M$ of the image $\rho(G_K)$, and $G^0$ be the connected
component of identity of $G$. Let $\Phi=G/G^0$ be the group of
connected components of $G$.

Suppose  \( X \) is a closed subscheme of \( M\) defined over
\( F \) and stable under the adjoint action of \( M \)
on itself. Let
\[
C := X(F) \cap \rho ( G_K ) .
\]
Let \( \Sigma_u \) denote the set of finite places of \( K \)
at which \( \rho \) is unramified, and $\rho(\sigma_v)$ denote the Frobenius
conjugacy class in $M(F)$ for $v\in \Sigma_u$.
Then the set
\[
    S : =  \{ v \in \Sigma_u  ~| ~ \rho ( \sigma_v ) \subset C \}.
\]
has a density given by
\[
   d(S) = \frac{ | \Psi | }{ | \Phi | },
\]
where $\Psi $ is the set of those $\phi \in \Phi$ such that the
corresponding connected component $G^\phi$ of $G$ is contained in $X $.
\end{theorem}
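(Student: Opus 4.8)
The plan is to recast the statement as an equidistribution assertion for the compact group $\Gamma:=\rho(G_K)$ and then to isolate the one geometric input that does the real work. Since $G_K$ is profinite and $\rho$ continuous, $\Gamma$ is a profinite subgroup of $M(F)$; let $\mu$ be its normalized Haar measure. The first point is that the ordinary Chebotarev density theorem, applied to the finite quotients $G_K/N$ for open normal subgroups $N$ of $G_K$, gives: for every clopen conjugation-stable $V\subseteq\Gamma$, the set $\{v\in\Sigma_u:\rho(\sigma_v)\subseteq V\}$ has density $\mu(V)$. In a profinite group every closed set is an intersection of clopen sets and every open set containing a compact set contains a clopen one; hence for a conjugation-stable closed $Z\subseteq\Gamma$ one has $\mu(Z)=\inf\{\mu(V):V\ \text{clopen, conjugation-stable},\ Z\subseteq V\}$, so that $ud(\{v:\rho(\sigma_v)\subseteq Z\})\le\mu(Z)$.

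Next I would record how $\Gamma$ meets the components of $G$. The component group $\Phi=G/G^0$ is finite, and $\Gamma\to\Phi$ is surjective (the preimage in $G$ of its image is Zariski-closed and contains $\Gamma$, hence equals $G$), so every component $G^\phi$ meets $\Gamma$; thus $\Gamma\cap G^\phi$ is a coset of $H:=\Gamma\cap G^0$ and $\mu(\Gamma\cap G^\phi)=1/|\Phi|$ for each $\phi$. Writing $K'/K$ for the finite Galois extension with $\Gal(K'/K)=\Phi$ cut out by $\Gamma\to\Phi$, we have $H=\rho(G_{K'})$, so $H$ is normal in $\Gamma$, and a comparison of indices ($[\Gamma:H]=|\Phi|=[G:G^0]$ while $\overline{H}^{\mathrm{Zar}}\subseteq G^0$ is normal in $G$) shows $H$ is Zariski-dense in $G^0$. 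The two bounds on $d(S)$ now follow. Because $X$ is conjugation-stable, $\Psi$ is a union of conjugacy classes of $\Phi$, so $W:=\bigcup_{\phi\in\Psi}(\Gamma\cap G^\phi)$ is clopen, conjugation-stable, contained in $X(F)\cap\Gamma$, and of measure $|\Psi|/|\Phi|$; hence the lower density of $S$ is at least $|\Psi|/|\Phi|$. Conversely $S\subseteq\{v:\rho(\sigma_v)\subseteq X(F)\cap\Gamma\}$, so by the squeeze above $ud(S)\le\mu(X(F)\cap\Gamma)$. Everything therefore reduces to the claim $\mu(X(F)\cap\Gamma)=|\Psi|/|\Phi|$, that is, $\mu(X(F)\cap\Gamma\cap G^\phi)=0$ whenever $G^\phi\not\subseteq X$.

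This last claim is the heart of the matter, and the step I expect to be the main obstacle. Fix $\phi$ with $G^\phi\not\subseteq X$; then $Z:=X\cap G^\phi$ is a proper Zariski-closed subset of $G^\phi$, which is irreducible since it is a $G^0$-torsor with an $F$-point. The set $\Gamma\cap G^\phi$ is a compact $F$-analytic manifold, and in any analytic chart $Z(F)\cap\Gamma\cap G^\phi$ is the common zero locus of finitely many $F$-analytic functions, the pullbacks of polynomials cutting out $Z$. No chart can lie inside $Z$: a nonempty $F$-open subset of $\Gamma\cap G^\phi$ contains a translate of an open finite-index subgroup of $H$, which is again Zariski-dense in $G^0$, so the translate is Zariski-dense in $G^\phi$ and cannot be contained in the proper closed set $Z$. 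Hence on each chart at least one defining analytic function is not identically zero, and the zero locus of a nonzero $F$-analytic function on a polydisc has Haar measure zero (by Strassmann's theorem in one variable and a Fubini induction on the dimension). Covering the compact manifold $\Gamma\cap G^\phi$ by finitely many charts gives $\mu(Z(F)\cap\Gamma\cap G^\phi)=0$. Combined with the two bounds, this shows that $d(S)$ exists and equals $|\Psi|/|\Phi|$.
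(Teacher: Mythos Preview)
The paper does not prove this theorem; it merely quotes it from \cite{R1} (with a pointer to \cite{S3}) and uses it as a black box. So there is no ``paper's own proof'' to compare against.

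Your argument is essentially the standard one underlying the cited references: reduce to Haar-equidistribution of Frobenii in the compact $\ell$-adic group $\Gamma=\rho(G_K)$ via Chebotarev on finite quotients, split $\Gamma$ along the component group $\Phi$, and then show that a \emph{proper} Zariski-closed subset of a component meets $\Gamma$ in a set of Haar measure zero using the $\ell$-adic analytic structure. The logic is sound. A couple of places deserve one extra sentence of justification if you write this up: (i) your index argument that $H=\Gamma\cap G^0$ is Zariski-dense in $G^0$ is correct but terse---the clean way is to note that $\overline{H}^{\mathrm{Zar}}$ is normal in $G$, so $G/\overline{H}^{\mathrm{Zar}}$ is an algebraic group in which the finite group $\Gamma/H$ has Zariski-dense image, forcing $G/\overline{H}^{\mathrm{Zar}}$ to be finite and hence $\overline{H}^{\mathrm{Zar}}=G^0$; (ii) for the measure-zero step you should state explicitly that $\Gamma$, being a compact subgroup of the $\ell$-adic Lie group $M(F)$, is itself an $\ell$-adic Lie group, so that local analytic charts exist and the restriction of Haar measure is, up to a smooth density, the product measure on a polydisc---this is what makes the Strassmann/Fubini argument go through. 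With those clarifications the proof is complete and matches the approach of \cite{R1,S3}.
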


\subsection{Proof of Theorem  \ref{characterpowers}}

If $\chi_{\rho_1}(\sigma_v)$ vanishes, then the hypothesis holds for any
integer $m_v$. On the other hand, if $\chi_{\rho_1}(\sigma_v)$ is non-zero, then $\chi_{\rho_1}(\sigma_v)$  and
$\chi_{\rho_2}(\sigma_v)$ differ by a root of unity belonging to
$F$. Since the group of roots of unity in the non-archimedean local
field $F$ is finite, there is an integer $m$ independent of $v$,
such  that for $v\in T$,
\[
	\chi_{\rho_1}(\sigma_v)^{m} =\chi_{\rho_2}(\sigma_v)^{m}
\]
Let
\[
  X^m := \{ ( g_1 , g_2 ) \in GL_n \times GL_n ~| ~ \mbox{Trace} (g_1)^m  = \mbox{Trace} (g_2)^m  ~ \}.
\]
\( X^m \) is a Zariski closed subvariety of \( GL_n \times GL_n \)
invariant under conjugation. Let $G$ be the Zariski closure in
$GL_n\times GL_n$ of the image $\rho_1\times \rho_2(G_K)$.
By Theorem \ref{Algebraic-Chebotarev}, the density condition on $T$ implies the existence a connected
component $G^{\phi}$ of $G$ contained inside $X^m$.  Since $G_1$ is
assumed to be connected, the projection map from $G^{\phi}$ to $G_1$
is surjective. Hence there is an element of the form $(1,y)\in
G^{\phi}(\overline{F})$.

Since $G$ is reductive, by working over the complex numbers
and with a maximal compact subgroup $J$ of $G$, we can assume that
there is an element of the form $(1,y)\in J^{\phi}\cap X^m$. Since the
only elements in an unitary group $U(n)$ with the absolute value of
it's trace being precisely $n$ are scalar matrices $\zeta I_n$ with
$|\zeta|=1$, we conclude that $y$ is of the form $\zeta I_n$ for
$\zeta$ a $m$-th root of unity.

We can write the connected component $G^{\phi}= G^0.(1,\zeta~I_n)$. In
particular, every element $(u_1, u_2)\in G^0$, the identity  component
of $G$, can be written as
\[
   (u_1, u_2)=(z_1, \zeta^{-1}z_2),
\]
where $ (z_1, z_2) \in G^{\phi} \cap X^m $.
Since $\zeta$ is a $m$-th root of unity, we have
\[
  \mbox{Trace}(u_1 )^m = \mbox{Trace}(z_1 )^m =\mbox{Trace}(z_2)^m
    = \mbox{Trace}( \zeta^{-1} z_2 )^m  =\mbox{Trace}(u_2^m).
\]
Hence $ G^0 \subset X^m$. Let $p_i, ~i=1,~2$ be the two
projections from $G^0$ to $GL(n)$.  The statement $G^0\subset X^m$
can be reformulated as saying that
\[
	\chi_{p_1}^m=\chi_{p_2}^m,
\]
restricted to $G^0$, where \( \chi_{p_1} \) and \( \chi_{p_2} \)
are the characters associated to \( p_1 \) and \( p_2 \)
respectively.

We now argue as in (\cite{R2}).
The characters $\chi_{p_1}$ and $\chi_{p_2}$
differ by an $m$-th root of unity. Since the characters
are equal at identity, they are equal on a connected
neighbourhood of identity. Since a neighbourhood of identity is
Zariski dense in a connected algebraic group, and the
characters are regular functions on
the group, it follows that the characters are equal on $G^0$.
Thus it follows that the representations $p_1$ and $p_2$ are equivalent
restricted to $G^0$. This proves that $\rho_1$ and $\rho_2$ are
potentially equivalent.

\begin{remark} \label{Unitary-Monodromy-at-Infinite-place}
Working with unitary groups and specializing to the identity element,
can be considered as a proof involving analytic continuation of
Galois monodromy at the infinite place.
\end{remark}

\section{Proof of Theorem \ref{poteq}}

In this section we give a proof of Theorem \ref{poteq}. We start with
the following lemma about semi-simple algebraic groups.

\subsection{A lemma on algebraic groups}

\begin{lemma} \label{surj-to-SL2}
Let \( G \) be a  connected reductive algebraic group defined over a
field $F$ of characteristic zero.
 Let \( p : G \rightarrow GL_2 \) be a surjective homomorphism
defined over $F$.  Then, \( p ( G(F) ) \)
contains \( SL_2 (F) \).
\end{lemma}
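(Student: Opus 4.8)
The plan is to reduce to understanding the structure of a connected reductive group $G$ admitting a surjection onto $GL_2$, and then exploit the near-simplicity of $SL_2$. First I would pass to the derived group: since $GL_2 = SL_2 \cdot Z(GL_2)^0$ and $SL_2$ is the derived group of $GL_2$, the homomorphism $p$ restricted to the derived group $G' = [G,G]$ (a connected semisimple group) must surject onto $SL_2 = [GL_2, GL_2]$. So it suffices to prove the statement for $G$ semisimple surjecting onto $SL_2$, provided I can handle rational points: I will need that $p(G'(F))$ is not much smaller than $p(G)(F) \supseteq$ the desired set, which is where characteristic zero helps.

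Next, for $G$ connected semisimple with a surjection $p: G \twoheadrightarrow SL_2$, the kernel $H = \ker p$ is a normal subgroup, hence (up to isogeny) $G$ is isogenous to $H^0 \times SL_2$ or more precisely there is an almost-direct-product decomposition; in any case, composing with the inclusion of the $SL_2$-factor gives a surjection $q: \widetilde{SL_2} = SL_2 \to SL_2$ that is an isogeny, and since $SL_2$ is simply connected in characteristic zero with no nontrivial connected isogenies onto itself... actually more carefully: there is a connected semisimple subgroup $G_1 \subseteq G$ (a product of some simple factors of the simply connected cover, or an appropriate almost-simple factor) such that $p|_{G_1}: G_1 \to SL_2$ is surjective with finite kernel, i.e. a central isogeny. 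Then I would use that any central isogeny $\phi: \widetilde{SL_2} \to SL_2$ over a field of characteristic zero is in fact an isomorphism (the fundamental group of $SL_2$ is trivial), so $G_1 \cong SL_2$ and $p|_{G_1}$ is an isomorphism of algebraic groups defined over $F$; hence $p(G_1(F)) = SL_2(F)$, which is contained in $p(G(F))$.

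The main obstacle I anticipate is the rational-points bookkeeping rather than the structure theory: a surjection of algebraic groups $p: G \to GL_2$ need not be surjective on $F$-points, and the almost-direct-product decomposition of $G'$ into almost-simple factors is only defined over $F$ after possibly a finite extension, with the factors permuted by Galois. I would address this by working with the simply connected cover $\pi: \widetilde{G'} \to G'$ (over $F$), noting $\pi(\widetilde{G'}(F)) $ generates $G'(F)$ up to the issue that in characteristic zero $\pi(\widetilde{G'}(F))$ is actually the full $F$-points of the image group's derived subgroup, and composing $p \circ \pi: \widetilde{G'} \to SL_2$; since $SL_2$ is simply connected, this map splits off an $F$-rational $SL_2$-factor of $\widetilde{G'}$ (the "minimal" factor mapping onto it), giving an $F$-morphism $s: SL_2 \to \widetilde{G'}$ with $p \circ \pi \circ s = \mathrm{id}$. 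Then $p(G(F)) \supseteq p(\pi(s(SL_2(F)))) = SL_2(F)$, as desired. Alternatively, and perhaps more cleanly for a two-paragraph write-up, one can avoid the cover entirely: choose a maximal torus and root $SL_2$-subgroups, observe that $p$ maps the $F$-points of a suitable root subgroup $U_\alpha \subset G$ onto a root subgroup of $GL_2$, and since $SL_2(F)$ is generated by the $F$-points of its two opposite unipotent root subgroups, it follows that $SL_2(F) \subseteq p(G(F))$.
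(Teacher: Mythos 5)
Your proposal is correct and follows essentially the same route as the paper's proof: restrict to the derived group, pass to the simply connected cover written as a product of simply connected factors, isolate the factor mapping onto \( SL_2 \), and use that \( SL_2 \) is simply connected to conclude the induced isogeny is an isomorphism defined over \( F \), so that \( SL_2(F) \) lies in the image of \( F \)-points. The only blemishes are inessential: the aside that \( \pi(\widetilde{G'}(F)) \) exhausts the \( F \)-points of the derived subgroup is false in general (compare \( SL_2 \to PGL_2 \)) but is never used, and the alternative root-subgroup sketch at the end would need extra care about \( F \)-rationality of the root subgroups.
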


\begin{proof}
The induced map \( p \)  from the derived group \( G^{d} \) of $G$ to the derived subgroup
 \( SL_2 \) of $GL_2$ is a surjective homomorphism
defined over \( F \).

Since \( G^{d} \) is a connected semi-simple algebraic group over \( F
\),
there exists a surjective homomorphism
\( \prod_i G_i \to G^d\) with finite kernel defined over \( F \),
where each \(  G_i \) is a connected,  simply connected,
simple algebraic groups defined over $F$  (\cite[Theorem 22.10]{B}).

This gives a surjective homomorphism \( \psi \) from
\( \prod_i G_i \) to \( SL_2 \) over \( F \). Since \( SL_2 \) is simple,
it follows that for each \( i \), \( \psi |_{G_i} : G_i \rightarrow
SL_2 \) is either trivial or an isogeny of algebraic groups. In the
latter case, \( G_i \) is either a form of \( SL_2 \) or \( PSL_2 \).
 Since \( SL_2 \) is simply connected,
\( G_i \) is in fact a form of \( SL_2 \) over \( F \).
In other words, the induced map
\( \psi : G_i \rightarrow SL_2 \) is an isomorphism over \( \bar{F} \). However,
since \( \psi \) is defined over \( F \) itself, this proves that \(
\psi : G_i \rightarrow SL_2 \) is an isomorphism over \( F \), proving
the lemma.
\end{proof}

\subsection{An arithmetic lemma}
\begin{lemma}\label{arith}
 Let $F$ be a non-archimedean local field of
  characteristic zero and residue characteristic $l$. Suppose $d, ~a$
  are non-zero elements in the ring of integers ${\mathcal O}$ of
  $F$.
Then there exists $x\in F$ such that
$d-ax^2$ is not a square in $F$.
\end{lemma}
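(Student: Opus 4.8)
The plan is to reduce the statement to a counting argument over the residue field. Since $F$ is a non-archimedean local field of characteristic zero, the group $F^\times/(F^\times)^2$ is finite of order at least $4$ (when the residue characteristic $\ell$ is odd it has order exactly $4$, and order $8$ or more when $\ell = 2$), so the squares form a proper, in fact ``small,'' subgroup. First I would note that it suffices to find a single $x \in F$ with $d - ax^2 \ne 0$ and $d - ax^2 \notin (F^\times)^2$; the hypothesis that $d, a$ are nonzero guarantees that the map $x \mapsto d - ax^2$ is nonconstant, so only finitely many values of $x$ make it vanish.

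Next I would argue by contradiction: suppose $d - ax^2 \in (F^\times)^2$ for every $x \in F$ with $d - ax^2 \ne 0$. Setting $x = 0$ gives that $d$ itself is a square, say $d = e^2$; absorbing $e$ we may as well assume $d = 1$, i.e. $1 - ax^2$ is a square for all $x$ outside a finite set. Equivalently, the conic $u^2 + a x^2 = 1$ has the property that every $F$-rational value of $x$ lifts to an $F$-point — but more usefully, rescaling, the assumption forces $1 - a x^2$ and $1 - a y^2$ to both be squares, hence their product $(1-ax^2)(1-ay^2)$ is a square for all admissible $x, y$. The cleanest route is to exploit the local square theorem: for $z \in \cO^\times$ sufficiently close to $1$ (congruent to $1$ modulo a suitable power of the maximal ideal), $z$ is automatically a square, while for a unit $z$ that is a non-square mod $\mathfrak{p}$ (when $\ell$ is odd), $z$ is a non-square in $F$. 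So I would choose $x \in \cO$ with small enough valuation adjustments so that $d - ax^2$ is a unit, and then vary $x$ modulo $\mathfrak{p}$ so that the reduction $\bar d - \bar a \bar x^2$ runs over ``many'' values in the residue field $k_v$; since a nonzero quadratic polynomial over a finite field takes non-square values (the squares together with $0$ number only $(|k_v|+1)/2 < |k_v|$ when $|k_v|$ is odd, and one checks the polynomial $\bar d - \bar a t^2$ is not identically a square plus the fact that it is nonconstant), we get some $x$ with $d - ax^2$ a non-square unit, hence a non-square in $F$.

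The residue characteristic $2$ case needs separate care, since squares mod $\mathfrak p$ tell us nothing there; but $F^\times/(F^\times)^2$ is still finite, so for any fixed nonzero $b \in F$ only finitely many residues of $x$ (modulo a high enough power $\mathfrak p^N$ with $2 \in \mathfrak p$, using Hensel/the local square theorem in the form that $1 + \mathfrak p^{2e+1} \subset (F^\times)^2$ where $e = v(2)$) can make $d - ax^2$ land in a fixed square class, and since $F$ is infinite we can pick $x$ avoiding all the bad classes. The main obstacle I anticipate is handling $\ell = 2$ cleanly: one must be careful that the ``bad'' set of $x$ (those for which $d - ax^2$ is either zero or a square) is genuinely a finite union of congruence classes and hence proper in $F$, rather than all of $F$. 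This is where a quantitative form of the local square theorem — controlling the conductor of $(F^\times)^2$ inside $\cO^\times$ — is essential, and I would invoke it explicitly. Modulo that input, the argument is a finite counting/pigeonhole statement.
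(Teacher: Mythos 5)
There is a genuine gap, in both branches of your argument. For odd residue characteristic, your key counting step fails: the value set of $\bar d - \bar a t^2$ over a finite field $k$ of odd order $q$ has exactly $(q+1)/2$ elements, which is the same as the number of squares together with $0$, so "nonconstant" gives no pigeonhole contradiction, and the claim itself is false in the smallest case: over $\mathbb{F}_3$ with $\bar d = \bar a = 1$ the polynomial $1 - t^2$ takes only the values $0$ and $1$, i.e.\ never a non-square unit. (The lemma is still true for $F = \mathbb{Q}_3$, $d=a=1$, but you must leave $\mathcal{O}$: e.g.\ $1 - (1/3)^2 = 8/9$ is a non-square, so the fix requires exploiting valuations of $x$, not residues of $x \in \mathcal{O}$.) You also gloss over the case where $a$ or $d$ has positive valuation, where the reduction $\bar d - \bar a \bar x^2$ can be constant; "small enough valuation adjustments" does not resolve this. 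For $\ell = 2$ the argument is essentially circular: the assertion that only finitely many residues of $x$ modulo $\mathfrak{p}^N$ can make $d - ax^2$ land in the trivial square class is unproved, and in fact the set $\{x : d - ax^2 \in (F^\times)^2\}$ is open and could a priori be all of $F$ away from the zero locus --- that this does not happen is precisely the lemma. That some genuine arithmetic input is unavoidable is shown by the archimedean analogue: over $\mathbb{R}$ with $a<0<d$ the quantity $d - ax^2$ is always a (nonzero) square, so finiteness of $F^\times/(F^\times)^2$ alone, without a quantitative comparison, cannot suffice.

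For comparison, the paper argues globally rather than residue-by-residue: assuming $d - ax^2$ is always a square, it deduces $d$ is a square, reduces to the statement that the norm form $z^2 - ay^2$ of $F(\sqrt a)/F$ takes only square values, uses multiplicativity of norms to force $-a$ to be a square, and then reaches a contradiction either because the split norm form is surjective onto $F$ (so every element would be a square) or, by local class field theory, because the norm group of a quadratic extension has index $2$ in $F^\times$ while $(F^\times)^2$ has index at least $4$. This treats all residue characteristics (including $\ell = 2$) and all valuations of $a, d$ uniformly, exactly the cases where your sketch breaks down. Your approach could likely be repaired --- a conic point count over $\mathbb{F}_q$ for $q \geq 5$, a separate treatment of $q = 3$ and of non-unit $a, d$ via valuation parity, and a genuine argument (not a finiteness assertion) for $\ell = 2$ --- but as written the central steps are not established.
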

\begin{proof}
Suppose $d-ax^2$ is a square in $F$ for any value of $x\in F$.
Specializing  \( x = 0 \) it follows that \( d =
b^2 \) for some  \( b \neq 0 \in F \). Writing $x=y/z$, we get
\[b^2-ax^2= ((bz)^2-ay^2)/z^2.\]
It follows that the homogenous form $z^2-ay^2$ is a square in $F$ for
any $y\in F, ~z\in F^*$.

The form  $z^2-ay^2$ can be considered as the norm form from the
quadratic algebra $F(\sqrt{a})$ to $F$. From the multiplicativity of norms,
\[ (z_1^2-ay_1^2)(z_2^2-ay_2^2)=
(z_1z_2+ay_1y_2)^2-a(z_1y_2+z_2y_1)^2\]
it follows upon equating $z_1z_2+ay_1y_2=0$, that $-a$ is a square in
$F$.  The form $z^2-ay^2$ is equivalent to the norm form $z^2+y^2$
from the quadratic
algebra $F(\sqrt{-1})$ to $F$, and is a square in $F$ for any $z,
~y\in F$.

If   $\sqrt{-1}\in F$, then $F(\sqrt{-1})\simeq F\times F$, and the
norm form is equivalent to the product form $(z,y)\mapsto zy$, and
is surjective onto $F$. This implies that every element of $F$ is a
square, and yields a contradiction.

If $\sqrt{-1}\not\in F$, then the image of the non-zero elements of
the field  $F(\sqrt{-1})$ by the norm map is a subgroup of index $2$
in $F^*$ by local class field theory. The hypothesis implies that this
is contained in the group $(F^*)^2$ which is of index at least $4$
since $F$ is a non-archimedean, local field of characteristic
zero. This is a contradiction and establishes the lemma.
\end{proof}

\subsection{Proof of Theorem \ref{poteq}}
The non-semisimple elements in $GL_2$ are contained inside
a proper Zariski closed set
given by the vanishing of the discriminant of its characteristic
polynomial. By Theorem
\ref{Algebraic-Chebotarev}, it follows
that at a set of places of density one, the Frobenius conjugacy classes
\( \rho_{1} ( \sigma_v ) \) are semisimple. In particular, we can
assume by going to a subset of $T$ (denoted again by $T$)
 with the same upper density,
that for $v\in T$,  \( \rho_{1} (\sigma_v) \) is
semisimple.

The eigenvalues of $\rho_{1}(\sigma_v)$ and \( \rho_{2}(\sigma_v) \) lie in quadratic
extensions of $F$,  say $F_1 (v)$ and respectively,
\( F_2 (v) \). Let \( F(v) \) be the compositum of \( F_1 (v) \) and
\( F_2 (v) \). Thus \( F(v) \) is a Galois extension of $F$ and
contained in   a biquadratic extension of \( F\).

By hypothesis, at a place $v\in T$, \( \rho_{1}(\sigma_v)^{n_v}=
\rho_{2}(\sigma_v)^{n_v} \).

For \( v \in T\), let \( \pi_{1,v} , ~\overline{\pi}_{1,v} \) and  \( \pi_{2,v} ,
~\overline{\pi}_{2,v} \) be respectively the roots of the
characteristic polynomials of \( \rho_{1}(\sigma_v)\) and
\(\rho_{2}(\sigma_v)  \).
Upto reordering, we have $\pi_{2,v}=u\pi_{1,v}$ and \( \overline{ \pi}_{2,v}  =
\bar{u } \overline{\pi}_{1,v}  \)
for some roots of unity $u, ~\overline{u} \in F(v)
$.

Since there are only finitely many quadratic extensions of $F$, the
collection of fields $F(v)$ as $v$ varies lie in a fixed local
field $F'$.  In particular, the group of roots of unity $\mu_{F'}$ belonging
to $F'$ is finite. For  roots of unity  $u, ~\overline{u}\in F'$, let
\[
   T_u := \{ v \in T ~|~ \pi_{2,v}= u \pi_{1,v}\quad \mbox{and}\quad
\overline{ \pi}_{2,v}  = \bar{u } \overline{\pi}_{1,v} \}.
\]

Since, the upper density of \( T\) is positive, it follows that
\(  T_u \) has  positive upper density for some $u\in \mu_{F'}$.
We consider two cases.

\subsubsection{\( u = \overline{u} \)}
Let $m$ be the order of $u$. In this case, we obtain
\[
   T_{u} := \{ v \in T ~|~ Tr ( \rho_{1, \ell} (\sigma_v)  )^m
   = Tr ( \rho_{2,\ell} (\sigma_v) )^m \}.
\]
Suppose $T_u$ has positive upper density.
It follows from Theorem \ref{characterpowers}
that \( \rho_1 \) and \( \rho_2 \) are potentially equivalent.

\subsubsection{\( u \neq \overline{u}\)}
We have,
\[
  \left( \begin{array}{cc} 1 & 1  \\ u & \bar{u} \end{array} \right)
  \left( \begin{array}{c} \pi_{1,v} \\ \overline{ \pi}_{1,v}  \end{array} \right)
  =  \left( \begin{array}{c} t_1 \\ t_2 \end{array} \right) ,
\]
where \( t_1 := Tr( \rho_{1} ( \sigma_v ) ) \) and
\( t_2 := Tr( \rho_{2} (\sigma_v ) ) \). The determinant of this matrix is \( \bar{u}-u \neq 0
\), and we get:
\begin{eqnarray*}
\pi_{1,v} & = & \frac{ \overline{u}t_1 - t_2}{\overline{u}-u } \\
\overline{\pi}_{1,v} & = &  \frac{ -{u } t_1 + t_2}{\overline{u}-u }
\end{eqnarray*}
Substituting in the  equation
\[
   \pi_{1,v} \overline{\pi}_{1,v} = \det ( \rho_{1} ( \sigma_v ) ) =
   \det ( \rho_{2} ( \sigma_v ) )
\]
and simplifying, we get
\begin{equation}
  - t_1^2 - t_2^2 + (u + \bar{u} ) t_1 t_2 =(u - \bar{u} )^2d,
\end{equation}
where \( d:= \det ( \rho_{1} (\sigma_v ) )=  \det ( \rho_{2} (\sigma_v ) )\),
since we have assumed that the determinant characters are equal.

Since $u$ generates at most quadratic extension of $F$, and $u\neq \overline{u}$,
it follows that the elements $a=(u + \bar{u} )$ and $ b= (u - \bar{u} )^2\neq 0$ belong to $F$. The above equation becomes

The above equation simplifies as below depending on \( u \):
\begin{equation} \label{notequal}
    t_1^2 + t_2^2 -a t_1 t_2 = bd
\end{equation}

Let $\rho= \rho_{1}\times \rho_{2}: G_K\to  GL_2(F)\times  GL_2(F)$
be the product representation
and $G$ be the algebraic monodromy group corresponding to $\rho$.

Let \( X_u \) be the subvariety of \( GL_2 \times GL_2 \)
defined by Equation (\ref{notequal}).
The variety \( X_u \) is a closed subvariety and closed
under the adjoint action of \( GL_2 \times GL_2 \).

By Theorem \ref{Algebraic-Chebotarev},
there exists a connected component \( G^\phi \) that is contained in
\( X_u \). There is a place $w$ of
$\bar{K}$, such that the
associated Frobenius element $\rho(\sigma_w)$ belongs to
$G^\phi ( \overline{F} ) $.
Since $\rho ( \sigma_w ) \in GL_2 (F ) \times GL_2(F)$,
this element is rational over $F$.  The translate
$\rho(\sigma_w)G^0$ is a connected component of $G$ defined over $F$ and
is isomorphic to $G^{\phi}$. Hence $G^{\phi}$ is defined over $F$.

By Lemma (\ref{surj-to-SL2}),
the image of the induced map from \( G^0 ( F) \) to
\( GL_2 ( F) \) by the first projection contains \( SL_2 (F ) \).
Hence the image of $G^{\phi}(F)$ contains the translate $A SL_2 (F )$
where $A= \rho_1(\sigma_w)\in GL_2(F)$. In particular, this means that
the element $t_1\in F$ can be an arbitrary element of $F$. Hence,
Equation (\ref{notequal}) continues to have rational solutions $t_2\in
F$ for any element $t_1\in F$.

Considering Equation (\ref{notequal}) as a quadratic equation in
$t_2$, it follows that the discriminant
\[ (at_1)^2-4(t_1^2-bd)= 4bd+(a^2-4)t_1^2\]
takes square values in $F$ for any $t_1\in F$. Here $4bd\neq 0$.
Since $a=u+\overline{u}$ is a sum of roots of unity
and $u\neq \overline{u}$  by embedding $F$ inside $\C$, we conclude
that $a^2\neq 4$.
By Lemma \ref{arith}, this is not possible. This proves Theorem
\ref{poteq}.
\qed

\begin{remark}
The above argument consisting of specializing to elements in the algebraic monodromy
group of the Galois representation to arrive at a suitable contradiction, can be considered as an
argument involving $\ell$-adic analytic continuation of the Galois monodromy. This further explains the earlier
Remark (\ref{Unitary-Monodromy-at-Infinite-place}).
\end{remark}

\section{Proof of Theorem \ref{Mult-one-FF}}
In this section we prove Theorem \ref{Mult-one-FF}. We first recall
some facts about ordinary and supersingular elliptic curves
over finite fields.

\subsection{Ordinary and supersingular reduction}

Let $E$ be an elliptic curve over  a finite field $k$ with \( q = p^n \) elements.  The curve $E$ is said to be \emph{supersingular} if the group $E[p^r]$ of $p^r$-torsion points is $\{ 0 \}$, and is defined to be \emph{ordinary} otherwise (\cite[Chapter V, Section 3]{Si}). It is known that $E$ being ordinary is equivalent to  $a(E,k)$ being coprime to $p$. The Weil bound implies that \( F(E, k) \) is either
\( \Q \) or an imaginary quadratic field.

Define the \emph{Frobenius field} \( F(E, k) \) of \( E \) over \( k \) as  the splitting field of the
characteristic polynomial of the Frobenius endomorphism $x\mapsto x^q$ of $E$ acting on $V_{\ell}(E)$.

\begin{prop}\label{iso-class}
Let $E$ be an elliptic curve over  a finite field $k$ with \( q = p^n \) elements.
\begin{enumerate}
\item If \( E \) is  ordinary, then \( F(E, k) \) is an imaginary quadratic field in which \( p \) splits  completely. Further, $F(E,k)={\rm End}(E)\otimes \Q$. \\
\item If $E$ is super singular over $\mathbb{F}_p$ and $p\geq 5$, then $a(E, \mathbb{F}_p )= 0 $. Hence, $F(E, \mathbb{F}_p ) = \Q(\sqrt{-p}) $ and $p$ ramifies in $\Q(\sqrt{-p})$.
\end{enumerate}

\end{prop}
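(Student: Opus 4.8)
The plan is to dispose of the two parts separately, the common tool being the Hasse--Weil bound $|a(E,k)| \le 2\sqrt{q}$ together with the dichotomy recalled above: $E/k$ is ordinary exactly when $\gcd(a(E,k),p) = 1$, and supersingular otherwise. In each part the roots $\pi,\bar\pi$ of $\phi(t) = t^2 - a(E,k)\,t + q$ are algebraic integers with $\pi\bar\pi = q$, and all the assertions are read off from the factorization of $q = p^n$ in the quadratic field $\Q(\pi)$.

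\emph{Part (1).} Write $\pi$ for the $q$-power Frobenius endomorphism of $E$ and $a = a(E,k)$. First I would check that $F := F(E,k)$ is imaginary quadratic: the discriminant $a^2 - 4q$ is $\le 0$ by the Hasse bound, and $a^2 - 4q = 0$ would force $p \mid a$ (since $q = p^n$), contradicting ordinariness; hence $a^2 - 4q < 0$, so $F = \Q(\sqrt{a^2-4q})$ is a genuine imaginary quadratic field. Next, to show $p$ splits completely in $F$, I argue by contradiction: if $p$ is inert or ramified there is a unique prime $\mathfrak p$ of $\mathcal{O}_F$ above $p$, and it is fixed by the nontrivial automorphism $\sigma$ of $F$; since $\bar\pi = \sigma(\pi)$ this gives $\mathrm{ord}_{\mathfrak p}(\pi) = \mathrm{ord}_{\mathfrak p}(\bar\pi)$, and from $\pi\bar\pi = p^n$ we get $\mathrm{ord}_{\mathfrak p}(\pi) > 0$. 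Then $\mathfrak p$ divides both $\pi$ and $\bar\pi$, hence divides $\pi + \bar\pi = a$, hence $p \mid a$ --- contradicting ordinariness again. Finally, for $F(E,k) = \mathrm{End}(E)\otimes\Q$: one has $\Q(\pi) \subseteq \mathrm{End}(E)\otimes\Q$, and since $E$ is ordinary its endomorphism algebra is not a quaternion algebra, while it is not $\Q$ because $\pi \notin \Z$ (as $a^2 - 4q < 0$); so $\mathrm{End}(E)\otimes\Q$ is imaginary quadratic, and comparing dimensions it equals $\Q(\pi) = F(E,k)$. Here I would cite the standard structure theory of endomorphism rings of elliptic curves over finite fields (Deuring, Waterhouse, Tate).

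\emph{Part (2).} Suppose $E$ is supersingular over $\FF_p$ with $p \ge 5$. Then $p \mid a(E,\FF_p)$, while the Hasse bound gives $|a(E,\FF_p)| \le 2\sqrt{p}$, and $2\sqrt{p} < p$ precisely when $p > 4$; therefore $a(E,\FF_p) = 0$ under the hypothesis $p \ge 5$. Consequently $\phi_v(t) = t^2 + p$ and $F(E,\FF_p) = \Q(\sqrt{-p})$. Finally $p$ ramifies in $\Q(\sqrt{-p})$, since $p$ divides the discriminant of this field (equivalently, $(p) = (\sqrt{-p})^2$ as ideals of its ring of integers, so $p$ is neither split nor inert).

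I do not expect a serious obstacle. The only point needing care is the splitting argument in Part (1): one must use that in the non-split case the unique prime above $p$ is fixed by the nontrivial Galois automorphism, so that $\pi$ and $\bar\pi$ have equal $\mathfrak p$-adic valuations; this, combined with $\pi\bar\pi = p^n$, is exactly what forces $p \mid a$ and produces the contradiction with ordinariness. Everything else is a direct application of the Hasse bound and the ordinary/supersingular dichotomy, together with well-known facts about $\mathrm{End}(E)$ for elliptic curves over finite fields.
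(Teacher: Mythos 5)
Your proof is correct, and for the key point of part (1) it takes a genuinely different route from the paper. The paper defers most of the statement to Silverman and Waterhouse and proves the splitting of $p$ via the $p$-adic Tate module: for an ordinary curve the faithful map ${\rm End}(E) \to {\rm End}(T_p(E))\otimes \Q_p \simeq \Q_p$ shows at once that ${\rm End}(E)$ is commutative (so ${\rm End}(E)\otimes\Q$ is the imaginary quadratic field $\Q(\pi)$, not a quaternion algebra) and that $F\otimes\Q_p$ admits a unital $\Q_p$-algebra map to $\Q_p$, hence cannot be a field, forcing $p$ to split. You instead argue purely inside the number field $\Q(\pi)$: if $p$ were inert or ramified, the unique prime $\mathfrak p$ above $p$ is fixed by the nontrivial automorphism, so $\pi$ and $\bar\pi=\sigma(\pi)$ have equal positive $\mathfrak p$-adic valuation (from $\pi\bar\pi=p^n$), whence $\mathfrak p \mid \pi+\bar\pi = a$ and $p\mid a$, contradicting ordinariness; this is a clean, elementary valuation-theoretic argument and is complete as stated (including the preliminary check $a^2-4q<0$, which the paper leaves to the citation). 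The trade-off is that for $F(E,k)={\rm End}(E)\otimes\Q$ you must invoke the Deuring--Waterhouse--Tate classification to exclude the quaternion case, a fact the paper's Tate-module map delivers as a byproduct (commutativity of ${\rm End}(E)$); since the paper itself leans on Silverman and Waterhouse for the remaining structure theory, your citation is on the same footing. Part (2) is identical in both treatments.
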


\begin{proof}
Most of this proposition is proved in (\cite[Chapter V, Section
3]{Si}), except perhaps the fact that for an ordinary elliptic curve
$p$ splits completely in $F(E,k)$ (\cite{W}). To see this,
we observe that there is a faithful
morphism (\cite[page 139]{Si}),
\[ {\rm End}(E) \to {\rm End}(T_p(E))\otimes \Q_p\simeq \Q_p.\]
This implies that  ${\rm End}(E)$ is commutative. Tensoring with
$\Q_p$ yields a (unital) homomorphism of $\Q_p$-algebras
$ {\rm End}(E)\otimes \Q_p \to \Q_p$. Hence $ {\rm End}(E)\otimes
\Q_p$ cannot be a field and this implies $p$ splits
completely in  $ {\rm End}(E)\otimes \Q_p= F(E,k)$. For the second statement, note that since $E$ is supersingular
$p | a(E, \FF_p ) $. The Hasse bound $| a(E, \FF_p) | \leq  2 \sqrt{p} $ together with $ p \geq 5$ implies that
$a(E, \FF_p )= 0 $.
\end{proof}

\begin{remark}
It can be seen  (\cite[Theorem 4.1]{W}) that Part (1) of the foregoing proposition characterizes
ordinary elliptic curves over any finite field $k$.
\end{remark}

\subsection{Image of Galois} In (\cite{S1, S2}), Serre initiated the study
of  the image $\rho_{E,\ell}(G_K)$ of the Galois group and proved the following theorem:

\begin{theorem} (Serre) \label{galoisimage}
Let \( E \) be an elliptic curve over a number field \( K \). Let \( \ell \) be a prime.
Let \( \rho_{E,\ell} \) be the galois representation attached to \( E \). Let $G$ be the Zariski closure
in $GL_2$ over $\Q_\ell$ of the image of the Galois group $\rho_{E,\ell}(G_K)$.
If $E$ does not have complex multiplication, then $G=GL_2$.
\end{theorem}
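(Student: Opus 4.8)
The plan is to prove Serre's Theorem \ref{galoisimage}, that for an elliptic curve $E/K$ without complex multiplication the Zariski closure $G$ of $\rho_{E,\ell}(G_K)$ in $GL_2$ over $\Q_\ell$ is all of $GL_2$. I would first record the basic constraints on $G$: the Weil pairing identifies $\det\circ\rho_{E,\ell}$ with the $\ell$-adic cyclotomic character, which has infinite (indeed open) image in $\Q_\ell^\times$, so $\det(G)=\mathbb{G}_m$ and in particular $G$ has dimension at least $1$ and is not contained in $SL_2$. Next I would show $G$ is reductive: since $\rho_{E,\ell}$ is semisimple (by Faltings, or for the purpose of the argument one may take semisimplicity as known in this context), the identity component $G^0$ is reductive. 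Combined with $\dim G\geq 1$ and $G\subset GL_2$, the candidates for $G^0$ are essentially a torus (of rank $1$ or $2$), or a group containing $SL_2$, i.e. $SL_2$, $GL_2$, or $\mathbb{G}_m\cdot SL_2=GL_2$.

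The heart of the argument is to rule out the torus cases, and this is where the non-CM hypothesis enters. If $G^0$ were a torus, then after a finite extension $L/K$ (to pass to the connected component) $\rho_{E,\ell}|_{G_L}$ would be abelian, hence a sum of two characters $\chi_1\oplus\chi_2$ of $G_L$ (possibly over a quadratic extension of $\Q_\ell$ if the torus is non-split). One then shows that the associated characters must be Hecke characters / algebraic, and that a two-dimensional $\ell$-adic representation of $G_L$ which becomes a sum of such characters forces $E$ to acquire complex multiplication over $\bar K$ — concretely, $\mathrm{End}_{\bar K}(E)\otimes\Q$ would be an imaginary quadratic field, contradicting the hypothesis. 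The cleanest route is: the two eigen-characters $\chi_1,\chi_2$ are interchanged by $\Gal(\bar K/K)$ and their ratio $\chi_1/\chi_2$ cuts out a finite-index subgroup on which $E$ has all its $\ell$-power torsion decomposed compatibly, and by comparing with the compatible family (the $a_v$ are rational integers satisfying the Hasse bound) one deduces CM via the standard criterion that a non-CM elliptic curve has no extra endomorphisms over any extension. I expect \textbf{this step — excluding the abelian (torus) image — to be the main obstacle}, since it is precisely the substance of Serre's theorem and genuinely requires the arithmetic input that $E$ is non-CM, not just formal group theory.

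Having eliminated the tori, $G^0$ contains $SL_2$; since $\det(G)=\mathbb{G}_m$ is all of the center's image, $G$ contains both $SL_2$ and a set of elements surjecting onto $\mathbb{G}_m$ under $\det$, whence $G\supseteq \mathbb{G}_m\cdot SL_2 = GL_2$, and as $G\subseteq GL_2$ we get $G=GL_2$ as claimed. (One should note $G$ is then automatically connected, consistent with $\det$ being connected and $G/G^0$ being a finite quotient of a group surjecting onto the connected $\mathbb{G}_m\cdot SL_2$.) I would remark that for the applications in this paper one only needs $G_1\cong GL_2$ as an abstract algebraic group, which is exactly the output above, and that an alternative to reproving Serre's theorem here is simply to cite \cite{S1,S2} — but including the connectedness-and-reductivity reduction makes the logical structure transparent and isolates the one genuinely hard input.
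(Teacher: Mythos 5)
The paper does not prove this statement at all: it is quoted as Serre's theorem and the proof is the citation to \cite{S1,S2}, so there is no internal argument to compare yours against. Your group-theoretic frame is sound as far as it goes: $\det\circ\rho_{E,\ell}$ is the cyclotomic character, so $\det(G)=\mathbb{G}_m$; semisimplicity of $V_\ell(E)$ (Faltings) makes $G^0$ reductive; the connected reductive subgroups of $GL_2$ are tori or contain $SL_2$; and once $SL_2\subseteq G$ and $\det(G)=\mathbb{G}_m$, any $x\in GL_2$ equals $(xg^{-1})g$ with $g\in G$, $\det g=\det x$, so $G=GL_2$.

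The genuine gap is exactly where you flag it: the exclusion of the torus case is not an argument as written. Saying that the eigencharacters are ``Hecke characters / algebraic'' and that one ``deduces CM via the standard criterion that a non-CM elliptic curve has no extra endomorphisms over any extension'' is circular -- the whole difficulty is to \emph{produce} the extra endomorphisms from the hypothesis that the image is abelian, and the locally-algebraic/Hecke-character route you gesture at is precisely Serre's hard theorem (resting on Tate's Hodge--Tate results), not something that can be waved through. The step can, however, be closed cleanly with an input you have already invoked: if $G^0$ is a torus, then $\rho_{E,\ell}|_{G_L}$ has abelian Zariski-dense image for some finite $L/K$, so the commutant of the image in $M_2(\Q_\ell)$ has $\Q_\ell$-dimension at least $2$ (it is $\Q_\ell\times\Q_\ell$, a quadratic field, or $M_2(\Q_\ell)$ according to whether the torus is split noncentral, nonsplit, or central); by Faltings' endomorphism theorem ${\rm End}_L(E)\otimes\Q_\ell\simeq {\rm End}_{\Q_\ell[G_L]}(V_\ell(E))$, so ${\rm End}_L(E)\otimes\Q$ has dimension at least $2$. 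The scalar case is impossible (it would force $a_v^2=4Nv$ at degree-one places $Nv=p\geq 5$ with $p\mid a_v$, violating the Hasse bound), and a commutative $2$-dimensional endomorphism algebra is an imaginary quadratic field, i.e.\ $E$ has CM over $L$, contradicting the hypothesis. Either supply this argument (noting it uses Faltings, whereas Serre's original 1972 proof does not), or do what the paper does and simply cite \cite{S1,S2}.
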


Given a number field $K$, the set of finite places of $K$ of degree one over $\Q$ is of density
 one. Hence in working with a set of places of  positive upper
density, we can restrict to the subset of places of degree $1$ over
$\Q$. We have the following proposition due to Serre
(\cite[Chapter IV, Exercises,  pages 13-14]{S1}):
\begin{corollary} \label{ssdensity}
Let \( E \) be an elliptic curve over a number field \( K
\) without complex multiplication. Then, the set of places \( v \in
\Sigma_K \) such that \( E \) has  supersingular reduction at \( v \)
has upper density \( 0 \).
\end{corollary}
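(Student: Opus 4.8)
The plan is to deduce Corollary \ref{ssdensity} from Theorem \ref{galoisimage} by applying the algebraic Chebotarev density theorem (Theorem \ref{Algebraic-Chebotarev}) to the representation $\rho_{E,\ell}$. Since $E$ has no complex multiplication, Theorem \ref{galoisimage} tells us that the Zariski closure $G$ of $\rho_{E,\ell}(G_K)$ equals $GL_2$, which is connected; hence the component group $\Phi = G/G^0$ is trivial. By Proposition \ref{iso-class}, a place $v$ of good reduction of degree one over $\Q$ with $Nv = p \geq 5$ is supersingular exactly when $a_v(E) = 0$, i.e. when the Frobenius $\rho_{E,\ell}(\sigma_v)$ has trace zero. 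So up to the density-zero set of places of degree $> 1$ over $\Q$ and the finitely many places with $Nv < 5$, the set of supersingular places is $\{ v \mid \rho_{E,\ell}(\sigma_v) \subset C \}$ where $C = X(\Q_\ell) \cap \rho_{E,\ell}(G_K)$ and $X = \{ g \in GL_2 : \operatorname{Trace}(g) = 0 \}$.

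The key steps, in order, are: (i) note $X$ is a closed subscheme of $GL_2$ defined over $\Q_\ell$ and stable under conjugation (the trace is a conjugation-invariant regular function); (ii) apply Theorem \ref{Algebraic-Chebotarev} with $M = GL_2$, $\rho = \rho_{E,\ell}$, $G = GL_2$, $G^0 = GL_2$, $\Phi$ trivial; (iii) observe that the unique component $G^0 = GL_2$ is \emph{not} contained in $X$ — for instance the identity matrix has trace $2 \neq 0$ — so the set $\Psi$ in Theorem \ref{Algebraic-Chebotarev} is empty, giving $d(S) = |\Psi|/|\Phi| = 0$; (iv) translate back: the set of supersingular places differs from $S$ by the density-zero set of places of degree $\geq 2$ over $\Q$ together with finitely many small places, so its upper density is $0$. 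One should also remark that supersingular primes always exist in abundance only in the CM case, but here we only need the upper bound, so it suffices that $\Psi = \emptyset$.

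I do not expect any serious obstacle: the entire argument is a direct specialization of Theorem \ref{Algebraic-Chebotarev}. The one point requiring a line of care is the reduction from ``supersingular at $v$'' to ``$\operatorname{Trace}(\rho_{E,\ell}(\sigma_v)) = 0$'': this equivalence, as used via Proposition \ref{iso-class}(2), is clean for $v$ of degree one over $\Q$ with residue characteristic $\geq 5$, and the places violating either condition form a set of density zero, so they may be discarded without affecting the upper density. A secondary minor point is that one works with the density $d(S)$ produced by Theorem \ref{Algebraic-Chebotarev} — which exists and equals $0$ here — and concludes $ud = d = 0$ for the supersingular set after adjusting by a density-zero set; since a subset of a density-zero set has upper density zero, this causes no difficulty.
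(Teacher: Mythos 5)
Your proposal is correct and follows essentially the same route as the paper: restrict to degree-one places with residue characteristic at least $5$ where supersingularity forces $a_v(E)=0$, note that $X=\{g\in GL_2 \mid \mathrm{Trace}(g)=0\}$ is a proper closed conjugation-invariant subset not containing the (connected, by Theorem \ref{galoisimage}) monodromy group $GL_2$, and conclude density zero from Theorem \ref{Algebraic-Chebotarev}. Your write-up just spells out the component-group bookkeeping ($\Phi$ trivial, $\Psi=\emptyset$) that the paper leaves implicit.
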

\begin{proof} Since $E$ does not have CM, the Zariski closure of the
image of Galois is $GL_2$. At a place $v$ of
 degree one over $\Q$ having supersingular reduction for $E$,
$a_v(E)=0$ provided $Nv = p \geq
5$. Since the set $X=\{g\in GL_2\mid {\rm Trace}(g)=0\}$ is a  proper
closed conjugation invariant subset of $GL_2$, the proposition follows
from  Theorem \ref{Algebraic-Chebotarev}.
\end{proof}

\subsection{Proof of Theorem \ref{Mult-one-FF}}
Suppose \( E_1 \) and \( E_2 \)
are isogenous over a finite extension \( L \) of \( K \). Consider the
curves over $L$.
For any place $w$ of $L$ where both the elliptic curves have good
reduction, the reduced curves $E_{1,w}$ and $E_{2,w}$ are
isogenous. Hence the characteristic polynomial of the Frobenius
conjugacy classes are equal and their associated Frobenius fields
$F(E_1, w)$ and $F(E_2,w)$ are isomorphic.

If $w$ is a place of $L$ of degree one over $K$, then $E_{1,w}$ is
isomorphic to $E_{1,v}$ and hence they have the same Frobenius
fields. This holds for $E_2$ as well. Since the set of places $v$ of
$K$ for which there exists a place $w$ of $L$ of degree one over $K$
is of positive density in $K$, it follows that \( S( E_1, E_2) \) has positive
density and hence positive upper density.

We now prove the converse.  Suppose that the upper density of
\( S :=S (E_1, E_2 ) \) is positive.  Since \( E_1 \) is without complex
multiplication, by Proposition \ref{ssdensity},
the set of places \( v \in \Sigma_K \) such that \(E_{1v} \)
is ordinary has density \( 1 \).  Let \
\[
S_1 := \{ v \in S ~|~ E_{1,v} ~\textrm{is~ordinary and} ~{\rm
  deg}_{\Q}(v)=1 \}.\]
 Thus, \( ud
(S_1 ) = ud(S) >0\).

By Proposition \ref{iso-class}, \( E \) has good
ordinary reduction at \( v \)  if and only if \( F(E,v)\)
is an imaginary quadratic field and \( p_v \) splits in \( F(E, v )\),
where \( p_v \) is the prime of \( \Q \) that lies below \( v
\). This implies that \( p_v \) splits in \( F(v)=F(E_1, v )  = F( E_2, v )
\). Consequently, every \( v \in S_1 \) is a place of good
ordinary reduction for both \( E_1 \) and \( E_2 \).

For \( v \in S_1\), let \( \pi_{1,v} , \overline{\pi}_{1,v} \) and  \( \pi_{2,v} ,
\overline{\pi}_{2,v} \) be respectively the roots of the
characteristic polynomials \( \phi_v (E_1, t) \) and \(  \phi_v
(E_2,t) \). Thus,
\[ \pi_{1,v} , \overline{\pi}_{1,v}  = \pi_{2,v} \overline{\pi}_{2,v}= p_v\]
As ideals of \( F(v) := F(E_1,v) = F(E_2 ,v) \), we have:
\[ ( \pi_{1,v}) ( \overline{\pi}_{1,v} ) = ( \pi_{2,v} ) (
\overline{\pi}_{2,v} ) = ( p_v ).
\]
By unique factorization theorem for ideals, it follows that
\(\pi_{1,v} = u \pi_{2,v}\) or  \( \pi_{1,v} = u
\overline{\pi}_{2,v}\),
where \( u \) depends on \( v \in S_1\) and is a unit of \( F(v)\).
Renaming if need be, one can assume that
\begin{equation}
 \pi_{1,v} = u \pi_{2,v} .
\end{equation}
Since the units in $F(v)$ are roots of unity, it follows that the the
representations $\rho_{E_1,\ell}$ and $\rho_{E_2,\ell}$
are locally potentially equivalent:
\begin{equation}\label{12thpower}
 \rho_{E_1,\ell}(\sigma_v)^{12}= \rho_{E_2,\ell}(\sigma_v)^{12},
\end{equation}
for $v\in S_1$.

Since $E_1$ is assumed to be non-CM, by Theorem \ref{galoisimage}, the
Galois monodromy group $G_1=GL_2$. Hence by Theorem \ref{poteq}, the
representations  $\rho_{E_1,\ell}$ and $\rho_{E_2,\ell}$ are
potentially equivalent.

By Faltings theorem, it follows that $E_1$ and $E_2$ are isogenous
over a finite extension of $K$. This proves Theorem \ref{Mult-one-FF}.
\qed

\begin{remark}\label{Serre-proof}
J. -P. Serre pointed out an error in an earlier version of this paper,
and presented an alternate direct Galois
argument. We quote his argument: assume that both curves are non CM, and
non isogenous (over any extension of  K).
The Galois group acting on their ${\ell}$-division points is, for
$\ell$  large, the subgroup  $H_{\ell}$  of  $GL(2, \FF_\ell ) \times GL(2, \FF_\ell ) $ made up of the pairs having
the same determinant.

Let  $H'_\ell$  be the subset of  $H_\ell $  made up
of the pairs  $(g,g')$  where  $g,~g'$  are semi-simple, with distinct
eigenvalues, and their eigenvalues either are both in  $\FF_\ell$, or are not
in  $\FF_\ell$. This set is characterized by the discriminants
of the characteristic polynomials of $g$ and $g'$ either being
simultaneously squares or non-squares in $\FF_{\ell}$, and define
proper open subsets of $H_{\ell}$. Hence the Haar measure of
$H'_{\ell}$ is strictly less than $1$.

Suppose that the Frobenius eigenvalues of $E_1$ and $E_2$ at a place $v$ differ by a root of unity
$\zeta_k$ as in Equation (\ref{12thpower}). Assume that $\ell$ splits completely in $\Q(\zeta_{12})$. We have
$ ( \rho_{E_1,\ell} (\sigma_v), \rho_{E_2,\ell} (\sigma_v) ) \in H'_{\ell}$.

By Chebotarev density
theorem, the density $\alpha_{\ell}$ of such
primes for a chosen $\ell$ is less than $1$. Upon considering a
sufficiently large collection $P$ of rational primes $\ell$ as above, the
density of the set of places $v$ satisfying  Equation
(\ref{12thpower}) is less than $\prod_{\ell\in P}\alpha_{\ell}$ and
this product goes to zero as $P$ becomes large. This proves Theorem
\ref{Mult-one-FF}.
\end{remark}

\begin{remark}
Serre further observes the following: A computation shows that the density of  $H'_{\ell}$ in $H_{\ell}$  is $1/2 +
O(1/l)$. If one takes $ n$  different primes, one gets a density close to
$(1/2)^n$. By Chebotarev, this implies the result.

One advantage of the method is that it allows a quantitative result: the number of primes with norm  $< X$  such that the corresponding imaginary quadratic fields are the same is slightly smaller than
$X/ \log X$, at least under GRH. This can be done by a sieve argument a la Selberg, but it is a bit
complicated.
\end{remark}

\begin{remark}
The asymptotic behaviour of the set of places \( v \) for which the
associated Frobenius field is a given imaginary quadratic  field \( F
\) has been studied by various authors. Lang and Trotter in 1976
\cite{LT} conjectured:

\begin{conjecture}\label{LT-classical}
If \( E \) is an elliptic  curve defined over the field of rational numbers without
complex multiplication and \( F \) an imaginary quadratic field, then, as \( x \rightarrow \infty \),
\[
S(x, E, F) := \# \{ p \leq x ~\mid ~ F(E,p) = F \} \sim C(E, F) \frac{x^{1/2} }{\log x}
\]
for some positive constant \( C (E,F ) \) depending on \( E \) and
\( F \).
\end{conjecture}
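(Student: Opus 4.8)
Conjecture \ref{LT-classical} is the classical Lang--Trotter conjecture for Frobenius fields, and it is open; what I can offer is the heuristic model that predicts the asymptotic, together with an indication of where a genuine argument would have to supply something new. The plan is to factor the count $S(x,E,F)$ into a ``horizontal'' ($\ell$-adic, Chebotarev) part and a ``vertical'' (archimedean, Sato--Tate) part, and to impose, as a natural but unproven hypothesis, that the two are asymptotically independent.

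For the horizontal part, write $F=\Q(\sqrt D)$ with $D<0$ squarefree; then $F(E,p)=F$ is exactly the condition $a_p(E)^2-4p\in D\cdot(\Q^{\times})^2$, equivalently $4p=a_p(E)^2+|D|c^2$ for some integer $c\ge 1$, which in particular forces $p$ to split in $F$ for all but finitely many $p$. For each prime $\ell$ this is a congruence-and-square-class condition on the image of $\rho_{E,\ell}(\sigma_p)$ in $GL_2(\Z/\ell^{k})$ for suitable $k$, and by the Chebotarev density theorem these images equidistribute in the conjugacy classes of the image of $\rho_{E,\ell}(G_{\Q})$ in $GL_2(\Z/\ell^{k})$. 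Since $E$ has no complex multiplication, Serre's open-image theorem (\cite{S1, S2}; the Zariski-closure form is Theorem \ref{galoisimage}) controls this image at every level, so the local correction factor $\delta_{\ell}(E,F)$ --- the ratio, in the limit over $k$, of the proportion of the relevant set inside $\rho_{E,\ell}(G_{\Q})\bmod\ell^{k}$ to its proportion inside $GL_2(\Z/\ell^{k})$ --- is computable, equals $1$ for all but finitely many $\ell$, and hence $\prod_{\ell}\delta_{\ell}(E,F)$ is effectively a finite product, of positive value.

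For the vertical part, the Sato--Tate law (a theorem for non-CM $E/\Q$) governs the distribution of $a_p(E)/\sqrt p$ in $[-2,2]$; heuristically a prescribed integer value of $a_p(E)$ is attained with ``probability'' of order $p^{-1/2}$, while the constraint $a_p(E)^2-4p\in D\cdot(\Q^{\times})^2$ leaves, for each $p$, only boundedly many admissible values on average. Granting independence of the $\ell$-adic corrections from one another and from the archimedean input, one is led to
\[
S(x,E,F)\ \approx\ c_{\infty}(E,F)\Big(\prod_{\ell}\delta_{\ell}(E,F)\Big)\sum_{p\le x}p^{-1/2}\ \sim\ C(E,F)\,\frac{x^{1/2}}{\log x},
\]
with $c_{\infty}(E,F)>0$ the archimedean constant and $C(E,F)$ the resulting product; the scale $x^{1/2}/\log x$ is nothing but $\sum_{p\le x}p^{-1/2}$, evaluated by partial summation against the prime counting function.

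The main obstacle is precisely the independence hypothesis. There is no effective joint equidistribution statement for the pair $\big(\rho_{E,\ell}(\sigma_p)\bmod\ell^{k},\ a_p(E)/\sqrt p\big)$ strong enough to isolate the main term, and for this reason even the far weaker assertion $S(x,E,F)\to\infty$ is not known unconditionally. The rigorous results go one way only: upper bounds of essentially the conjectured order of magnitude, via Selberg-type sieve estimates of the kind sketched by Serre in Remark \ref{Serre-proof} (and sharper under GRH), whereas a matching lower bound $S(x,E,F)\gg x^{1/2}/\log x$ is out of reach of current technology. The theorems of this paper (Theorems \ref{Mult-one-FF} and \ref{FF-with-CM}) yield only the qualitative fact that the relevant sets have positive upper density exactly when they ought to; promoting that to the Lang--Trotter count would require a genuinely new analytic input, not merely a refinement of the algebraic Chebotarev arguments used here.
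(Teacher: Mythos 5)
This statement is a conjecture (due to Lang and Trotter), which the paper merely states for context and does not prove; indeed Theorem \ref{Mult-one-FF} is explicitly described as only a weaker, qualitative version of the companion Conjecture \ref{LT-isogeny}. You correctly recognize this and refrain from claiming a proof: what you give is the standard Lang--Trotter probabilistic model (Chebotarev equidistribution at each $\ell$ via Serre's open-image theorem, a Sato--Tate archimedean factor of size $p^{-1/2}$, and an unproven independence hypothesis yielding the constant $C(E,F)$ and the scale $\sum_{p\le x}p^{-1/2}\sim x^{1/2}/\log x$), which is exactly the heuristic behind the original conjecture and is consistent with the paper's remarks that only upper bounds of the conjectured order (e.g.\ via sieve arguments, sharper under GRH) and positive-upper-density statements are currently accessible. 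There is nothing to compare against in the paper, and no gap to flag beyond the one you yourself identify: the independence hypothesis is the open problem.
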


Conjecture (\ref{LT-classical}) has been extensively studied by many authors including \cite{CD1, CD2} where it is studied in the
context of elliptic curves and Drinfeld modules.

It is natural to ask a related but different question: How often the Frobenius fields of two elliptic curves coincide? In fact, based on heuristics, the following conjecture is
suggested on page 38 \cite{LT} for non-CM elliptic curves over the rationals. A generalized version is stated below.

\begin{conjecture}\label{LT-isogeny}
Let \( E_1 \) and \( E_2 \) be two elliptic curves over the rationals without complex multiplication. Then,
\( E_1 \) is not isogenous to \( E_2 \) if and only if
\[
S (x, E_1 , E_2 ) := \# \{ p \leq x ~\mid ~ F(E_1,p) = F (E_2, p) \} = O ( \sqrt{x}/ \log x ) .
\]
\end{conjecture}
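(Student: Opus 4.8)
\emph{Strategy.} The reverse implication, ``\( S(x,E_1,E_2)=O(\sqrt{x}/\log x)\Rightarrow E_1 \) is not isogenous to \( E_2 \)'', is immediate. If \( E_1 \) and \( E_2 \) become isogenous over some finite extension of \( \Q \) --- in particular if they are already isogenous over \( \Q \), and also in the quadratic-twist case, where \( a_p(E_1)^2=a_p(E_2)^2 \) forces \( F(E_1,p)=F(E_2,p) \) for every good \( p \) --- then \( S(x,E_1,E_2)\ge\pi(x)-O(1)\gg x/\log x \), which is not \( O(\sqrt{x}/\log x) \); taking the contrapositive gives the implication. (The twist example shows that ``isogenous'' in Conjecture \ref{LT-isogeny} must be interpreted as ``isogenous over some extension'', consistently with Theorem \ref{Mult-one-FF}.) So the entire content is the forward implication.

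For the forward implication the plan is to make Serre's sieve argument (Remark \ref{Serre-proof}) quantitative. Assume \( E_1 \) is not isogenous to \( E_2 \) over any extension; Theorem \ref{Mult-one-FF} already yields the qualitative bound \( S(x,E_1,E_2)=o(x/\log x) \). First I would discard the supersingular primes of \( E_1 \): these have density \( 0 \) by Corollary \ref{ssdensity}, with effective Chebotarev (and a power saving under GRH) available. For a good ordinary prime \( p \) and any prime \( \ell \) split in \( \Q(\zeta_{12}) \), the equality \( F(E_1,p)=F(E_2,p) \) forces \( (\bar\rho_{E_1,\ell}(\sigma_p),\bar\rho_{E_2,\ell}(\sigma_p)) \) into the subset \( H'_\ell\subset H_\ell \) of pairs whose characteristic-polynomial discriminants are simultaneously squares or simultaneously nonsquares mod \( \ell \); since \( H_\ell \) --- the group of pairs in \( \GL_2(\FF_\ell)\times\GL_2(\FF_\ell) \) with equal determinant, which by Serre's open-image theorem (Remark \ref{Serre-proof}) is the image of \( G_\Q \) on \( E_1[\ell]\times E_2[\ell] \) for \( \ell \) large --- the relative measure of \( H'_\ell \) in \( H_\ell \) is \( \tfrac12+O(1/\ell) \). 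Then I would apply a large sieve (or Selberg sieve) over the field \( \Q(E_1[N],E_2[N]) \), \( N=\prod_{\ell\in P}\ell \) with \( P \) a large set of such primes, whose Galois group over \( \Q \) is essentially \( \prod_{\ell\in P}H_\ell \), to bound \( S(x,E_1,E_2) \) by roughly \( 2^{-|P|}\pi(x) \) plus an error governed by the conductor, of size \( \asymp\prod_{\ell\in P}\ell^{O(1)} \), of this tower.

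The hard part --- and the reason Conjecture \ref{LT-isogeny} remains open --- is the optimization of \( |P| \) against \( x \) at the end. Because the conductor of the tower grows so fast, \( |P| \) can only be taken of size \( \asymp\log\log x \) unconditionally, giving a saving like \( S(x,E_1,E_2)\ll x\exp(-c\sqrt{\log x}) \), or \( \asymp\log x \) under GRH, giving \( S(x,E_1,E_2)\ll x^{1-\delta} \) for a small \( \delta>0 \). Reaching the conjectured \( O(\sqrt{x}/\log x) \) would require equidistribution for the product representation with error \( O_\varepsilon(x^{1/2+\varepsilon}) \) uniformly in \( \ell \) up to a power of \( x \) --- essentially the strength of the Lang--Trotter conjecture itself (indeed, even bounding the supersingular primes of \( E_1 \) below \( x \) by \( O(\sqrt{x}/\log x) \) is the Lang--Trotter conjecture for \( a_p=0 \)). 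I therefore do not expect a full proof of Conjecture \ref{LT-isogeny} by these methods; what the results of this paper contribute, beyond the trivial reverse implication, is the density-zero statement of Theorem \ref{Mult-one-FF} together with the effective, though weaker than \( \sqrt{x}/\log x \), bounds sketched above.
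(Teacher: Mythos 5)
The statement you were asked about is Conjecture \ref{LT-isogeny}, i.e.\ the Lang--Trotter conjecture itself, which the paper does not prove: it appears only as motivation, with the explicit remark that Theorem \ref{Mult-one-FF} establishes just a weaker, density-zero version. Your proposal is therefore correct in substance and consistent with the paper: the reverse implication is elementary (curves isogenous over some extension have \( F(E_1,p)=F(E_2,p) \) at almost all primes, so \( S(x,E_1,E_2)\gg x/\log x \)), while the forward implication at the strength \( O(\sqrt{x}/\log x) \) is open, and your quantitative elaboration of Serre's sieve (Remark \ref{Serre-proof}) can at best yield savings of the shape \( x^{1-\delta} \) under GRH, as you say. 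The only caveat worth recording is that ``discarding the supersingular primes'' in your sketch would itself require a bound of Lang--Trotter strength on the common supersingular primes to reach \( O(\sqrt{x}/\log x) \), a point you essentially acknowledge; there is no gap to report beyond the fact that no proof of the conjecture exists, in this paper or elsewhere.
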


Theorem (\ref{Mult-one-FF}) proves a weaker version of Conjecture (\ref{LT-isogeny}).
\end{remark}

\begin{remark}
In \cite{A}, an  algorithm is presented to decide when two abelian
varieities are isogenous, and also to  detect elliptic curves with CM.
\end{remark}

\begin{remark}
In the above theorem, it is necessary to assume that at least one of
the elliptic curves is without complex multiplication and can be seen as follows:

Suppose $v$ is a prime of $K$ of degree one over a rational prime $p\geq 5$, at which an elliptic
curve $E$ has good supersingular reduction. The Frobenius field $F(E,v)$ is $\Q(\sqrt{-p})$.
Let $F_1$ and $F_2$ be non-isomorphic imaginary quadratic fields of class number
one. Let $E_1$ and $E_2$ be  CM elliptic curves over $\Q$  with complex
multiplication by $F_1$ and $F_2$
respectively. At the set of primes $p$ of $\Q$ of good reduction for
 $E_1$ and $E_2$, and such that $p$ is inert in both
$F_1$ and $F_2$, the curves $E_1$ and $E_2$ have supsersingular
reduction. Hence there is a set of places of positive density (in fact having density \( \frac{1}{4} \)) at which
the Frobenius fields are isomorphic, but $E_1$ and $E_2$ are
non-isogenous.
\end{remark}

\begin{remark}
It can be seen that we can modify and prove the theorem under the assumption
that the upper density of the set of finite places \( v \) of \( K \)  for which
both the elliptic curves have good ordinary reduction at \( v \) is positive.
\end{remark}

\section{Proof  of Theorem \ref{FF-with-CM}}
Suppose \( E \) has complex multiplication by an imaginary quadratic
field \( F \). We want to show that
 the set \( S(E,F) := \{ v \in \Sigma_K ~| ~
F(E,v) = F \} \) has positive upper density.

Let \( v \)  be a place of \( K \) of good reduction for \( E \)
with CM by \( F \). From Proposition (\ref{iso-class}), the following
can be seen to be equivalent:

\begin{enumerate}
\item  \( E \) has ordinary reduction modulo \( v \).
\item  \( F(E, v ) = F \).
\item \( p_v \) splits in \( F \), where
\( p_v \) denotes the rational prime of \( \Q \) that lies below \( v \).
\end{enumerate}

Let \( L \) be the compositum of $K$ 
and \( F \).
Let \( Spl(L/\Q ) \) be the set of all primes \( p \)
that split completely in \( L \). Let
\[
 S := \{ v \in \Sigma_K ~|~ v \textrm{~lies~over}~ p \in Spl(L/\Q ) \}.
\] Thus, for a finite place \( v \in S \), \( \deg v \) is \( 1
\). Since every prime \( p \in  Spl(L/\Q ) \) also splits in \( F \),
it follows that \( F(E, v ) = F \)  for \( v \in S \). By the very
construction, \( S \subseteq S(E,F) \). Since every place \( v \in S
\) is of degree \( 1 \) and lies over the primes of \( Spl(L/ \Q ) \),
\[
 ud ( S(E,F) ) \geq ud (S) \geq ud( Spl(L/\Q ) ) = \frac{1}{[L: \Q]} > 0.
\]
\medskip\par
In the converse direction, we want to prove that if for some
imaginary quadratic field \( F \), \( ud (S(E, F) ) > 0 \),
then \( E \) has complex multiplication by \( F \). Without affecting the density, we will assume
that the places in $S(E, F)$ are of degree one over $\Q$ with residue
characteristic at least $5$.
\medskip\par

\noindent {\bf Case 1}:  Suppose \( E \) has complex multiplication
by an imaginary quadratic field \( F^\prime = \Q ( \sqrt{-d} ) \).
We want to prove that
\( F^\prime = F \). Let \( S = S( E, F) \). We can assume after
removing a finite set of places from $S$ that for \( v \in S \),
\( E \) has good reduction modulo \( v \) and \( p_v \) is not ramified in \(
F^\prime \).

Suppose \( p_v \) is inert in \( F^\prime \).  By Proposition \ref{iso-class}, \( E
\) has supersingular reduction
modulo \( v \) and \( F(E, v ) = \Q ( \sqrt{-p_v} )  = F \). The set
of such $v$ is finite.

Hence  for some \( v \in S \), \( p_v \) splits in \( F^\prime \).
This implies that the Frobenius field at \( v \) equals the
CM field, i.e. \(  F(E, v ) = F^\prime \). On the other hand, since \(
v \in S = S (E, F) \), we have \( F(E,v) = F \). This proves \( F =
F^\prime \).

\medskip\par

\noindent {\bf Case 2}:  Let us now consider the case when \( E \) is an elliptic curve over \( K \) without complex
multiplication. The idea is to
construct an elliptic curve, say \( E^\prime \), over a suitable number field with complex multiplication by \( F \) and to
apply Theorem (\ref{Mult-one-FF}) to prove that \( E \) and \( E^\prime \) are isogenous over some extension of \( K \).

\medskip\par

Let \( \mathcal{O}_F \) be the ring of integers of \( F \). Let \(
E^\prime \) be the elliptic curve over \( \C \) such that \( E^\prime
( \C ) \simeq \C / \mathcal{O}_F \). The theory of complex
multiplication implies that \( E^\prime \) is defined over \( H :=
H(F) \), the Hilbert class field of \( F \). Let \( L := HK \) be the
compositum of \( H \) and \( K \).

We wish to apply Theorem
(\ref{Mult-one-FF}) to the two elliptic curves \( E \) and \( E^\prime
\) considered as elliptic curves defined over \( L \). Thus, we need
to prove that the set of places \( w \) of \( L \) such that \( F(E,w)
= F(E^\prime,w) \) has positive upper density.

\medskip\par
Let us denote by \( S_K \) the set of degree \( 1 \) places \( v \in S(E, F) \subseteq \Sigma_K \). Then,
\( ud ( S_K ) = ud ( S(E,F)) \).
Let \( S_\Q \) be the set of primes \( p_v \) of \( \Sigma_Q \) that
lie below the places of \( v \in S_K \). Then \( ud ( S_\Q ) \)
is also positive.

Let \( p \in S_\Q \) and let \( v \) be a place of \( K \) that lies above \( p = p_v \).
By construction, the
Frobenius field at \( v \) equals \( \Q ( \pi_v ) = F \).
Since, \( \pi_v \overline{\pi}_v  = Nv = p = p_v \), the
primes \( p \in S_\Q \) split in \( F \).

Let \( S_F \) be the set of places of \( F \) that
lie over the set of places of \( S_\Q \).
Then \( S_F := \bigcup_{v \in S_K } \{ ( \pi_v ) , ( \overline{\pi}_v ) \} \).
Thus,
\( ud ( S_F ) \) is positive.

The prime ideals of \( S_F \) are principal. By class field theory, they split
completely in the Hilbert class field \( H \) of \( F \).
This implies that the primes \( p \in S_\Q \) split completely in
\( H \).

 Let \( S_L \) be the set of primes of \( L \) that lie above
\( S_K \). Let \( w \in S_L \) be a place above \( v \in S_K \).
Since \( p_v \) splits completely in \( H \), it is easy to see
that the prime \( v \) of \( K \) splits completely in \( L \).
This implies that \( \deg (w) = \deg (v) = 1 \),
implying \( ud ( S_L ) > 0 \).

By considering \( E \) as an elliptic curve over \( L \), it follows that \( F(E,w) = F(E,v) = F \) where \( w \in S_L \) and \(
v \in S_K \) that lies below \( w \).
Similarly, we have \( F( E^\prime, w) = F \).

Applying Theorem (\ref{Mult-one-FF}) to \( E \) and \( E^\prime \) considered
as elliptic curves over \( L \), it follows
that \( E \) and \( E^\prime \) are isogenous
over some finite extension of \( L \), proving the theorem. \qed


\proof[Acknowledgements] 
The results presented in this paper are motivated by a question raised by Dipendra Prasad. We would like to thank Dipendra Prasad for some useful discussions. We gratefully acknowledge the valuable feedback from J.-P. Serre, who pointed out an error in an earlier version of this paper. The second author thanks the School of Mathematics, Tata Institute of Fundamental Research, Mumbai for its excellent hospitality and work environment. The second author also thanks his past employer, International Institute of Information Technology Bangalore, Bangalore, where part of this work was carried out.

\end{document}